\renewcommand*{\backref}[1]{}
\renewcommand*{\backrefalt}[4]{%
	\ifcase #1 (Not cited.)%
	\or        (Cited on page~#2.)%
	\else      (Cited on pages~#2.)%
	\fi}
		\newcounter{theorem_c} 
		\numberwithin{theorem_c}{section} 
		\numberwithin{equation}{section} 
		\theoremstyle{plain}
		\newtheorem{lemma}[theorem_c]{Lemma}
		\newtheoremstyle{exampstyle}
		  {2mm} 
		  {2mm} 
		  {\itshape} 
		  {} 
		  {\bfseries} 
		  {.} 
		  {.5em} 
		  {} 
		\theoremstyle{exampstyle}
		\newtheorem{definition}[theorem_c]{Definition}
		\newtheorem{remark}[theorem_c]{Remark}
	\newcommand{\naturals}{\mathbb{N}} 
	\newcommand{\integers}{\mathbb{Z}} 
	\newcommand{\inject}{\hookrightarrow} 
	\newcommand{\eqdef}{:=} 
	\newcommand{\suchthat}[2]{\left\{#1 \: \middle\vert \: #2\right\}} 
		\newcommand{\id}[1]{id_{#1}} 
		\newcommand{\tensor}{\otimes} 
		\newcommand{\bigtensor}{\bigotimes} 
		\newcommand{\tensorUnit}{I} 
		\newcommand{\cartesianTensor}{\times} 
		\newcommand{\Hom}[3]{\operatorname{Hom}_{\,#1}\left[#2,#3\right]} 
		\newcommand{\Nat}[3]{\operatorname{Nat}_{#1}[#2,#3]} 
		\newcommand{\Pin}[1]{{^\circ}(#1)} 
		\newcommand{\Pout}[1]{{(#1)^\circ}} 
		\newcommand{\Mset}[1]{\mathcal{M}^{\naturals}_{#1}} 
		\newcommand{\Iset}[1]{\mathcal{M}^{\integers}_{#1}} 
		\newcommand{\IStrings}[1]{#1^{\integers}} 
		\newcommand{\Gr}[1]{\mathcal{G}_{#1}} 
		\newcommand{\Multiplicity}{\mathfrak{M}} 
		\newcommand{\Net}[1]{(P_{#1},T_{#1},\Pin{-}_{#1},\Pout{-}_{#1})}
		\newcommand{\Fold}[1]{\mathfrak{F}(#1)} 
		\newcommand{\UnFold}[1]{\mathfrak{U}(#1)} 
		\newcommand{\PetriZ}{\textbf{Petri}^\integers}
		\newcommand{\PetriZZ}{\textbf{Petri}^{\integers \text{state}}}
		\newcommand{\PetriN}{\textbf{Petri}^\naturals}
		\newcommand{\GExPetri}{\textbf{GExPetri}}
		\newcommand{\ExPetriZ}{\textbf{ExPetri}^\integers}
		\newcommand{\ExPetriZZ}{\textbf{ExPetri}^{\integers \text{state}}}
		\newcommand{\ExPetriN}{\textbf{ExPetri}^\naturals}
		\newcommand{\CategoryC}{\mathcal{C}}
		\newcommand{\CategoryD}{\mathcal{D}}
		\newcommand{\CategoryE}{\mathcal{E}}
		\newcommand{\obj}[1]{\operatorname{obj} \, #1} 
		\tikzset{
			oriented WD/.style={
				every to/.style={out=0,in=180,draw},
				label/.style={
					font=\everymath\expandafter{\the\everymath\scriptstyle},
					inner sep=0pt,
					node distance=2pt and -2pt},
				semithick,
				node distance=1 and 1,
				decoration={markings, mark=at position \stringdecpos with \stringdec},
				ar/.style={postaction={decorate}},
				execute at begin picture={\tikzset{
						x=\bbx, y=\bby,
						every fit/.style={inner xsep=\bbx, inner ysep=\bby}}}
			},
			string decoration/.store in=\stringdec,
			string decoration={\arrow{stealth};},
			string decoration pos/.store in=\stringdecpos,
			string decoration pos=.7,
			bbx/.store in=\bbx,
			bbx = 1.5cm,
			bby/.store in=\bby,
			bby = 1.5ex,
			bb port sep/.store in=\bbportsep,
			bb port sep=1.5,
			bb port length/.store in=\bbportlen,
			bb port length=4pt,
			bb penetrate/.store in=\bbpenetrate,
			bb penetrate=0,
			bb min width/.store in=\bbminwidth,
			bb min width=1cm,
			bb rounded corners/.store in=\bbcorners,
			bb rounded corners=2pt,
			bb small/.style={bb port sep=1, bb port length=2.5pt, bbx=.4cm, bb min width=.4cm, 
				bby=.7ex},
			bb medium/.style={bb port sep=1, bb port length=2.5pt, bbx=.4cm, bb min width=.4cm, 
				bby=.9ex},
			bb/.code 2 args={
				\pgfmathsetlengthmacro{\bbheight}{\bbportsep * (max(#1,#2)+1) * \bby}
				\pgfkeysalso{draw,minimum height=\bbheight,minimum width=\bbminwidth,outer 
					sep=0pt,
					rounded corners=\bbcorners,thick,
					prefix after command={\pgfextra{\let\fixname\tikzlastnode}},
					append after command={\pgfextra{\draw
							\ifnum #1=0{} \else foreach \i in {1,...,#1} {
								($(\fixname.north west)!{\i/(#1+1)}!(\fixname.south west)$) +(-
								\bbportlen,0) 
								coordinate (\fixname_in\i) -- +(\bbpenetrate,0) coordinate (\fixname_in\i')}\fi 
							\ifnum #2=0{} \else foreach \i in {1,...,#2} {
								($(\fixname.north east)!{\i/(#2+1)}!(\fixname.south east)$) +(-
								\bbpenetrate,0) 
								coordinate (\fixname_out\i') -- +(\bbportlen,0) coordinate (\fixname_out\i)}\fi;
				}}}
			},
			bb name/.style={append after command={\pgfextra{\node[anchor=north] at 
						(\fixname.north) {#1};}}}
		}
		\tikzset{
			unoriented WD/.style={
				every to/.style={draw},
				shorten <=-\penetration, shorten >=-\penetration,
				label distance=-2pt,
				thick,
				node distance=\spacing,
				execute at begin picture={\tikzset{
						x=\spacing, y=\spacing}}
			},
			pack size/.store in=\psize,
			pack size = 8pt,
			spacing/.store in=\spacing,
			spacing = 8pt,
			link size/.store in=\lsize,
			link size = 2pt,
			penetration/.store in=\penetration,
			penetration = 2pt,
			pack color/.store in=\pcolor,
			pack color = blue,
			pack inside color/.store in=\picolor,
			pack inside color=blue!20,
			pack outside color/.store in=\pocolor,
			pack outside color=blue!50!black,
			surround sep/.store in=\ssep,
			surround sep=8pt,
			link/.style={
				circle, 
				draw=black, 
				fill=black,
				inner sep=0pt, 
				minimum size=\lsize
			},
			pack/.style={
				circle, 
				draw = \pocolor, 
				fill = \picolor,
				inner sep = .25*\psize,
				minimum size = \psize
			},
			outer pack/.style={
				ellipse, 
				draw,
				inner sep=\ssep,
				color=\pocolor,
			},
			intermediate pack/.style={
				ellipse,
				dashed, 
				draw,
				inner sep=\ssep,
				color=\pocolor,
			},
		}
		\tikzset{Yonepart/.pic={
				\node[bb={1}{2},bb name = {\tiny$X_{11}$}] (X11) {};
				\node[bb={2}{2},below right=of X11,bb name = {\tiny$X_{12}$}] (X12) {};
				\node[bb={2}{1}, above right=of X12,bb name = {\tiny$X_{13}$}] (X13) {};
				\node[bb={2}{2}, fit={($(X11.north west)+(.3,1.5)$) (X12)  ($(X13.east)+(-.3,0)$)},bb name = {\scriptsize $Y_1$}] (Y1) {};
				\draw (Y1_in1') to (X11_in1);	
				\draw (Y1_in2') to (X12_in2);
				\draw (X11_out1) to (X13_in1);
				\draw (X11_out2) to (X12_in1);
				\draw (X12_out1) to (X13_in2);
				\draw (X12_out2) to (Y1_out2');
				\draw (X13_out1) to (Y1_out1');
				\coordinate (bottombox) at ($(X12.south)$);
				\coordinate (rightbox) at ($(X13.east)$);
				\coordinate (Y1northwest) at ($(Y1.north west)$);
			}
		}
		\tikzset{Ytwopart/.pic={
				\node[bb={2}{2}, bb name = {\tiny$X_{21}$}] (X21) {};
				\node[bb={1}{2},above right=-1 and 1 of X21,bb name = {\tiny$X_{22}$}] (X22) {};
				\node[bb={1}{2}, fit={($(X21.south west)+(-.25,0)$) ($(X22.north east)+(.25,3.5)$)},bb name = {\scriptsize$Y_2$}] (Y2){};
				\draw (Y2_in1') to (X21_in2);
				\draw (X21_out1) to (X22_in1);
				\draw (X22_out2) to (Y2_out1');
				\draw let \p1=(X22.south east), \p2=($(Y2_out2)$), \n1={\y1-\bby}, \n2=\bbportlen in
				(X21_out2) to (\x1+\n2,\n1) -- (\x1+\n2,\n1) to (Y2_out2');
				\draw let \p1=(X22.north east), \p2=(X21.north west), \n1={\y1+\bby}, \n2=\bbportlen in
				(X22_out1) to[in=0] (\x1+\n2,\n1) -- (\x2-\n2,\n1) to[out=180] (X21_in1);
			}
		}
		\tikzset{SmallNeuronPic/.pic={
				\node[bb={3}{1}] (N1) {$\scriptstyle N_1$};
				\node[bb={2}{1}, above right=.7 and 3.5 of N1] (N2) {$\scriptstyle N_2$};
				\node[bb={2}{1}, below =of N2] (N3) {$\scriptstyle N_3$};
				\node[bb={3}{1}, below =of N3] (N4) {$\scriptstyle N_4$};
				\node[bb={6}{8}, fit={($(N1.west)-(.5,0)$) ($(N2.north)+(0,2)$) ($(N3.east)+(1.5,0)$) ($(N4.south)-(0,1)$)}, bb name={$\scriptstyle X$}] (X) {};
				\draw (X_in1') to (N2_in1);
				\draw (X_in2') to (N1_in1);
				\draw (X_in3') to (N1_in2);
				\draw (X_in4') to (N1_in3);
				\draw (X_in6') to (N4_in2);
				\draw (N1_out1) to (N2_in2);
				\draw (N1_out1) to (N3_in1);
				\draw (N1_out1) to (N4_in1);
				\draw (N2_out1) to (X_out1');
				\draw (N2_out1) to (X_out2');
				\draw (N2_out1) to (X_out3');
				\draw (N3_out1) to (X_out4');
				\draw (N3_out1) to (X_out5');
				\draw (N3_out1) to (X_out6');
				\draw (N4_out1) to (X_out7');
				\draw (N4_out1) to (X_out8'); 
				\draw (X_in5') to[looseness=2] (N3_in2);
				\draw let \p1=(N4.south east), \p2=(N4.south west), \n1={\y2-\bby}, \n2=\bbportlen in
				(N3_out1) to[in=0] (\x1+\n2,\n1) -- (\x2-\n2,\n1) to[out=180] (N4_in3);
			}
		}
		\tikzset{SmallNeuronDashed/.pic={
				\node[bb={3}{1}] (N1) {$\scriptstyle N_1$};
				\node[bb={2}{1}, above right=.7 and 3.5 of N1] (N2) {$\scriptstyle N_2$};
				\node[bb={2}{1}, below =of N2] (N3) {$\scriptstyle N_3$};
				\node[bb={3}{1}, below =of N3] (N4) {$\scriptstyle N_4$};
				\node[bb={6}{8}, fit={($(N1.west)-(.5,0)$) ($(N2.north)+(0,2)$) ($(N3.east)+(1.5,0)$) ($(N4.south)-(0,1)$)}, bb name={$\scriptstyle X$}] (X) {};
				\draw[dashed] (X_in1') to (N2_in1);
				\draw[dashed] (X_in2') to (N1_in1);
				\draw[dashed] (X_in3') to (N1_in2);
				\draw[dashed] (X_in4') to (N1_in3);
				\draw[dashed] (X_in6') to (N4_in2);
				\draw[dashed] (N1_out1) to (N2_in2);
				\draw[dashed] (N1_out1) to (N3_in1);
				\draw[dashed] (N1_out1) to (N4_in1);
				\draw[dashed] (N2_out1) to (X_out1');
				\draw[dashed] (N2_out1) to (X_out2');
				\draw[dashed] (N2_out1) to (X_out3');
				\draw[dashed] (N3_out1) to (X_out4');
				\draw[dashed] (N3_out1) to (X_out5');
				\draw[dashed] (N3_out1) to (X_out6');
				\draw[dashed] (N4_out1) to (X_out7');
				\draw[dashed] (N4_out1) to (X_out8'); 
				\draw[dashed] (X_in5') to[looseness=2] (N3_in2);
				\draw[dashed] let \p1=(N4.south east), \p2=(N4.south west), \n1={\y2-\bby}, \n2=\bbportlen in
				(N3_out1) to[in=0] (\x1+\n2,\n1) -- (\x2-\n2,\n1) to[out=180] (N4_in3);
			}
		}
		\tikzset{SmallNestingPic/.pic={
				\path (0,0) pic [purple] {Yonepart};
				\path ($(rightbox)+(5,-5)$) pic [orange] {Ytwopart};
				
				\node[bb={1}{2}, fit={($(Y1northwest)+(-.5,4)$) ($(Y2.south east)+(1,0)$)}, bb name={\small $Z$}] (Z) {};
				\draw (Z_in1') to (Y1_in2);
				\draw let \p1=(Y2.north west),\p2=(Y2.north east),\n1={\y2+\bby},\n2=\bbportlen in
				(Y1_out1) to (\x1+\n2,\n1)--(\x2+\n2,\n1) to (Z_out1');
				\draw (Y1_out2) to (Y2_in1);
				\draw (Y2_out2) to (Z_out2');
				\draw let \p1=(Y2.north east), \p2=(Y1.north west), \n1={\y2+\bby}, \n2=\bbportlen in
				(Y2_out1) to[in=0] (\x1+\n2,\n1) -- (\x2-\n2,\n1) to[out=180] (Y1_in1);
			}
		}
		\tikzset{Zredgreen/.pic={
				\node[bb={2}{2}, green!50!black, bb name = $\scriptstyle Y_1$] (YY1) {};
				\node[bb={1}{2}, red, below right=-1 and 2 of YY1, bb name=$\scriptstyle Y_2$] (YY2) {};
				\node[bb={1}{2}, fit={($(YY1.north west)+(-.5,4)$) ($(YY2.south east)+(.5,-2)$)}, bb name={\scriptsize $Z$}] (Z) {};
				\draw (Z_in1') to (YY1_in2);
				\draw (YY1_out1) to (Z_out1');
				\draw (YY1_out2) to (YY2_in1);
				\draw (YY2_out2) to (Z_out2');
				\draw let \p1=(YY2.north east), \p2=(YY1.north west), \n1={\y2+\bby}, \n2=\bbportlen in
				(YY2_out1) to[in=0] (\x1+\n2,\n1) -- (\x2-\n2,\n1) to[out=180] (YY1_in1);
			}
		}
		\tikzset{Zcombined/.pic={
				\node[bb={1}{2},green!25!black,bb name = {\tiny$X_{11}$}] (X11) {};
				\node[bb={2}{2},green!25!black,below right=of X11,bb name = {\tiny$X_{12}$}] (X12) {};
				\node[bb={2}{1}, green!25!black,above right=of X12,bb name = {\tiny$X_{13}$}] (X13) {};
				\draw (X11_out1) to (X13_in1);
				\draw (X11_out2) to (X12_in1);
				\draw (X12_out1) to (X13_in2);
				
				\node[bb={2}{2}, red!30!black, below right = 0 and 1.25 of X12, bb name = {\tiny$X_{21}$}] (X21) {};
				\node[bb={1}{2}, red!30!black, above right=-1 and 1 of X21,bb name = {\tiny$X_{22}$}] (X22) {};
				\draw (X21_out1) to (X22_in1);
				\draw let \p1=(X22.north east), \p2=(X21.north west), \n1={\y1+\bby}, \n2=\bbportlen in
				(X22_out1) to[in=0] (\x1+\n2,\n1) -- (\x2-\n2,\n1) to[out=180] (X21_in1);
				
				\node[bb={1}{2}, fit = {($(X11.north east)+(-1,3)$) (X12) (X13) ($(X21.south)+(0,-1)$) ($(X22.east)+(.5,0)$)}, bb name ={\scriptsize $Z$}] (Z) {};
				
				\draw (Z_in1') to (X12_in2);
				\draw (X13_out1) to (Z_out1');
				\draw (X12_out2) to (X21_in2);
				\draw let \p1=(X22.south east),\n1={\y1-\bby}, \n2=\bbportlen in
				(X21_out2) to (\x1+\n2,\n1) to (Z_out2');
				\draw let \p1=(X22.north east), \p2=(X11.north west), \n1={\y2+\bby}, \n2=\bbportlen in
				(X22_out2) to[in=0] (\x1+\n2,\n1) -- (\x2-\n2,\n1) to[out=180] (X11_in1);
			}
		}
\tikzstyle{place}=
\tikzstyle{transition}=
\tikzset{
	pics/netA/.style args={#1/#2/#3/#4/#5/#6}{code={

		
		\node [place,label=above right:$X$,colored tokens={%
			#1
		}] (-p_x) {};

		\node [transition,label=above:$\tau$, label=left:#4] (-t_tau) [above = of -p_x] {};
		
		\node [place,label=above:$Y$,colored tokens={%
			#2
		}] (-p_y) [right=of -t_tau] {};
		
		\node [transition,label=below:$\mu$, label=right:#5] (-t_mu) [below = of -p_y] {};
		
		\node [transition,label=below:$\nu$, label=left:#6] (-t_nu) [below =of -p_x] {};

		\node [place,label=below:$Z$,colored tokens={%
			#3
		}] (-p_z) [right=of -t_nu] {};
		
        \draw[->] (-p_x) -- (-t_tau);
		\draw[->] (-t_tau) -- (-p_y);
		\draw[->] (-p_y) -- (-t_mu);
		\draw[->] (-t_mu) -- (-p_x);
		\draw[->] (-p_x) -- (-t_nu);
		\draw[->] (-t_nu) -- (-p_z);

	}}
}
\tikzset{
	pics/netB/.style args={#1/#2/#3}{code={


		\node [place,label=above:$X$,colored tokens={%
			#1
		}] (-p_x) {};

		\node [transition,label=above:$\tau$, label=below:#3] (-t_tau) [right = of -p_x] {};
		
		\node [place,label=above:$Y$,colored tokens={%
			#2
		}] (-p_y) [right=of -t_tau] {};

        \draw[->] (-p_x) -- (-t_tau);
		\draw[->] (-t_tau) -- (-p_y);
	}}
}
\title{Executions in (Semi-)Integer Petri Nets \\ are Compact Closed Categories}
\author{
	Fabrizio Genovese \\
	Statebox Team\\
	\texttt{fabrizio@statebox.io}\\
	University of Oxford \\
	\texttt{fabrizio.genovese@cs.ox.ac.uk}
	\and
	Jelle Herold\\
	Statebox Team\\
	\texttt{jelle@statebox.io}
}
\begin{document}
\maketitle
\begin{abstract}
	In this work, we analyse Petri nets where places are allowed to have a negative number of tokens. For each net we build its correspondent category of executions, which is compact closed, and prove that this procedure is functorial. We moreover exhibit a procedure to recover the original net from its category of executions, show that it is again functorial, and that this gives rise to an adjoint pair. Finally, we use compact closeness to infer that allowing negative tokens in a Petri net makes the causal relations between transition firings non-trivial, and we use this to model interesting phenomena in economics and computer science.
\end{abstract}
\section{Introduction}\label{sec:introduction}
Petri nets are a well known tool to study concurrent systems, and have been around for decades~\cite{Petri2008}. Intuitively, a Petri net consists of a set of \emph{places}, pictorially depicted as circles, and a set of \emph{transitions}, represented as grey squares, that are connected to places via directed edges, decorated with natural numbers (see Figure~\ref{fig:petri}). To avoid clutter, we omit the decoration when it is equal to 1. Places can contain \emph{tokens}, that are represented as black dots. An assignment of tokens for a given net is called a \emph{state} (Figure~\ref{fig:state}).
\begin{figure}[h!]
	\centering
	\begin{subfigure}[b]{0.32\textwidth}\centering
		\begin{tikzpicture}[node distance=1cm,>=stealth',bend angle=45,auto]
			\begin{scope}
			\node (1a) {};
			\node [transition] (1b) [below of=1a] {};
			
			\node [place,tokens=0] (2a) [right of=1a]  {}
			edge [pre] node[swap] {2} (1b);
			\node [place,tokens=0] (2b) [below of=2a]  {}
			edge [pre] (1b);
			\node [place,tokens=0] (2c)  [below of=2b] {}
			edge [pre] node {3} (1b);
			
			\node [transition] (3a) [right of =2b]     {}
			edge [pre] (2a)
			edge [pre] node[xshift=-1mm, swap] {2} (2b)
			edge [pre] (2c);
			
			\node [place,tokens=0] (4a) [above right of=3a]  {}
			edge [pre] (3a);			
			\node [place,tokens=0] (4b) [below right of=3a]  {}
			edge [pre] (3a);
			\node [transition] (5a) [below right of =4a]  {}
			edge [pre] (4a)
			edge [pre] node {4} (4b);
			\node [transition] (5b) [right of =4a]  {}
			edge [pre] node[swap] {4} (4a);
			\end{scope}
		\end{tikzpicture}
		\caption{A Petri net.}
		\label{fig:petri}
	\end{subfigure}
	\hfill 
	\begin{subfigure}[b]{0.32\textwidth}\centering
		\begin{tikzpicture}[node distance=1cm,>=stealth',bend angle=45,auto]
			\begin{scope}
			\node (1a) {};
			\node [transition] (1b) [below of=1a] {};
			
			\node [place,tokens=3] (2a) [right of=1a]  {}
			edge [pre] node[swap] {2} (1b);
			\node [place,tokens=2] (2b) [below of=2a]  {}
			edge [pre] (1b);
			\node [place,tokens=4] (2c)  [below of=2b] {}
			edge [pre] node {3} (1b);
			
			\node [transition] (3a) [right of =2b]     {}
			edge [pre] (2a)
			edge [pre] node[xshift=-1mm, swap] {2} (2b)
			edge [pre] (2c);
			
			\node [place,tokens=1] (4a) [above right of=3a]  {}
			edge [pre] (3a);			
			\node [place,tokens=2] (4b) [below right of=3a]  {}
			edge [pre] (3a);
			\node [transition] (5a) [below right of =4a]  {}
			edge [pre] (4a)
			edge [pre] node {4} (4b);
			\node [transition] (5b) [right of =4a]  {}
			edge [pre] node[swap] {4} (4a);
			\end{scope}
		\end{tikzpicture}
		\caption{A state, before firing.}
		\label{fig:state}
	\end{subfigure}
	\hfill
	\begin{subfigure}[b]{0.32\textwidth}\centering
		\begin{tikzpicture}[node distance=1cm,>=stealth',bend angle=45,auto]
			\begin{scope}
			\node (1a) {};
			\node [transition] (1b) [below of=1a] {};
			
			\node [place,tokens=2] (2a) [right of=1a]  {}
			edge [pre] node[swap] {2} (1b);
			\node [place,tokens=0] (2b) [below of=2a]  {}
			edge [pre] (1b);
			\node [place,tokens=3] (2c)  [below of=2b] {}
			edge [pre] node {3} (1b);
			
			\node [transition, label=above:$\blacktriangledown$] (3a) [right of =2b]     {}
			edge [pre] (2a)
			edge [pre] node[xshift=-1mm, swap] {2} (2b)
			edge [pre] (2c);
			
			\node [place,tokens=2] (4a) [above right of=3a]  {}
			edge [pre] (3a);			
			\node [place,tokens=3] (4b) [below right of=3a]  {}
			edge [pre] (3a);
			\node [transition] (5a) [below right of =4a]  {}
			edge [pre] (4a)
			edge [pre] node {4} (4b);
			\node [transition] (5b) [right of =4a]  {}
			edge [pre] node[swap] {4} (4a);
			\end{scope}
		\end{tikzpicture}
		\caption{Marked transition has fired.}
		\label{fig:firing}
	\end{subfigure}
		\caption{}
\end{figure}
\noindent
We interpret  places as types, tokens as resources of the type corresponding to the place they are in, and transitions as processes that convert resources into other resources. In more detail, when a transition converts resources into other resources we say that it \emph{fires}. When firing, a transition consumes tokens in the places connected to it via an inbound edge, and produces tokens in places connected to it via an outbound edge. The number of tokens consumed/produced for each place is specified by the weighting on the edges. Firing is denoted with the symbol $\blacktriangledown$ (see Figures~\ref{fig:state} and~\ref{fig:firing}).

Petri nets were originally invented to study chemical reactions~\cite{Petri2008}, but quickly found a place in computer science as models for concurrency~\cite{Nielsen1991, Riemann1999}. This is motivated by the idea that transitions sharing some input places have to \emph{compete} for tokens to fire, and can then be thought of as \emph{concurrent processes}.

\noindent
Petri nets, as presented above, are good models for concurrency but difficult to implement: Tokens represent resources, but the model itself offers no way of ``tracking'' their history (i.e. all the proccesses by which a given token was consumed/produced), which is fundamental to turning a Petri net into actual code.
We have then to distinguish a Petri net, representing a process in the abstract, from its \emph{executions} (sometimes also called \emph{computations}), representing all the possible ways to run it.

In~\cite{Meseguer1990, Sassone1996, Sassone2000, Bruni2001} Petri nets have been characterized categorically. An outcome of this line of work has been linking Petri nets to their executions in terms of functorial relationships between categories~\cite{Sassone1995}. In this work we carry on along the same lines, as follows: In Section~\ref{sec: first definitions} we generalize the notion of Petri net allowing for negative tokens, and explain why this is desirable; in Section~\ref{sec:executions} we reshape the categorical characterization of net executions, such that the functorial relationship is preserved; in Section~\ref{sec:causality}, we will exploit some categorical properties of our model (mainly compact closedness) to show how our generalized nets have non-trivial behavior, and will provide examples of why this is useful; in Section~\ref{sec:future work} we will sketch future directions of research. 

Interestingly, our approach to Petri nets will have striking similarities with the one used in categorical quantum mechanics, both from a structural point of view -- our categories will be compact closed, and we will make great use of string diagrams throughout the paper -- and from a conceptual one -- causality flow in (semi-)integer nets will be non-trivial and similar in flavor to quantum teleportation.
\section{Integer and semi-integer Petri nets}\label{sec: first definitions}
There are many definitions of Petri net, not always equivalent. We follow an approach similar to the one used in~\cite{Sassone1995}. In the remainder of this work, we will adopt the following notations: Categorical composition $A \xrightarrow{f} B \xrightarrow{g} C$ will be denoted with $f;g$. Given a set $S$, $\Mset{S}$ will denote the set of all \emph{finite multisets} on $S$, that is, the set of all functions $S \to \naturals$ that are non-zero only on a finite subset of $S$. Similarly, $\Iset{S}$ will denote the set of all \emph{finite signed multisets} on $S$, viz. the set of all functions $S \to \integers$ that are non-zero only on a finite subset of $S$. It is worth recalling the well-known fact that for each $S$, $\Mset{S}$ is the free commutative monoid generated by $S$ under the operation of multiset union~\cite{Sassone1995}, while $\Iset{S}$ is the free abelian group generated by $S$ under multiset union and subtraction~\cite[45-46]{Joshi2003}.

\begin{definition}\label{def:Petri Category}
	A \emph{Petri net} $N$ is a 4-tuple $\Net{N}$, where:
	\begin{itemize}
		\item $P_N$ is a set, called the \emph{set of places};
		\item $T_N$ is a set, called the \emph{set of transitions};
		\item $\Pin{-}_N, \Pout{-}_N$ are functions $T_N \to \Mset{P_N}$, called \emph{input} and \emph{output}, respectively.
	\end{itemize}
	A \emph{state for the net $N$} is an element of $\Mset{P_N}$, representing how many tokens are in each place.
	
	Given nets $N := \Net{N}$ and $M := \Net{M}$, a \emph{morphism from $N$ to $M$} is a pair $\langle f,g \rangle$ where $f$ is a function $T_N \to T_M$, $g$ is a monoid homomorphism $\Mset{P_N} \to \Mset{P_M}$, and the following conditions hold:
	\begin{equation*}
	f;\Pin{-}_M = \Pin{-}_N;g \qquad f;\Pout{-}_M = \Pout{-}_N;g
	\end{equation*} 
	It is straightforward to check that Petri nets and Petri net morphisms form a category, called $\PetriN$.
\end{definition}
\noindent
A morphism of nets $N \to M$ expresses the fact that $N$ \emph{can be simulated by} $M$, as thoroughly explained in~\cite{Meseguer1990}. We are now ready to define the main objects of our investigation, generalizing the previous definition:
\begin{definition}\label{def:PetriZ and ZZ Category}
	A \emph{semi-integer Petri net} is a Petri net where states are elements of $\Iset{P_N}$. An \emph{integer Petri net} is a semi-integer Petri net where $\Pin{-}_N, \Pout{-}_N$ are functions $T_N \to \Iset{P_N}$. A morphism of semi-integer nets is defined exactly as in~\ref{def:Petri Category}, while a morphism $N \to M$ of integer nets is defined taking $g$ in the pair $\langle f,g \rangle$ to be a group homomorphism $\Iset{P_N} \to \Iset{P_M}$.
	
	\noindent
	Semi-integer nets and their morphisms form again a category, and so do integer nets and their morphisms. We denote them as $\PetriZZ$ and $\PetriZ$, respectively.
	
\end{definition}
\noindent
All in all, semi-integer Petri nets are just ordinary nets where states are allowed to have negative tokens, but transitions can only produce/consume positive ones. Integer nets, instead, also allow for transitions to consume and produce negative tokens. 

Furthermore, some readers may have noticed that the category $\PetriN$ is defined exactly as in~\cite{Sassone1995}, and that $\PetriN$ and $\PetriZZ$ are the same thing. This should not surprise, since the categories just defined do not capture any information about the net states, and only account for the underlying topology. This is part of a bigger problem, precisely that the role of states for a net has always been ambiguous. States represent the ``dynamical part'' of the net (resources produced/consumed) and are often not considered to be part of it. The impact that this ambiguity has on the definition of net executions will be evident and thoroughly discussed at the end of Section~\ref{sec:executions}.

Now we recast our graphical formalism to deal with (semi-)integer nets. We represent a negative number of tokens in a place using red dots. For instance, the transition in Figure~\ref{fig:negpetri} has $-2$ tokens in its input place and $3$ tokens in its output place. Note that $\integers$ being a group, we can always ``produce'' an equal number of positive and negative tokens in each place (see Figure~\ref{fig:group}). This has dramatic consequences on the behaviour of our nets: Now each transition can fire at will ``borrowing'' tokens from a place, that is left with an equal number of tokens of the opposite sign (see Figures~\ref{fig:neg pre firing} and~\ref{fig:neg post firing}).
\begin{figure}[h!]
	\centering
	\begin{subfigure}[b]{0.2\textwidth}\centering
			\begin{tikzpicture}[node distance=1.3cm,>=stealth',bend angle=45,auto]			
			\node[place, colored tokens={red, red}] (1a) {};
			\node[transition] (2a) [right of=1a] {}
			edge [pre] (1a);
			\node[place, tokens=3] (3a) [right of=2a] {}
			edge [pre] node[swap] {-4} (2a);
		\end{tikzpicture}
		\caption{An integer Petri net.}
		\label{fig:negpetri}
	\end{subfigure}
	~\hfill 
	\begin{subfigure}[b]{0.2\textwidth}\centering
		\begin{tikzpicture}[node distance=1.3cm,>=stealth',bend angle=45,auto]		
			\node[place, tokens=1] (1a) {};
			\node[] (2a) [right of=1a] {$=$};
			\node[place, colored tokens={black, red, black}] (3a) [right of=2a] {};
		\end{tikzpicture}
		\caption{Integer tokens.}
		\label{fig:group}
	\end{subfigure}
		~\hfill 
	\begin{subfigure}[b]{0.2\textwidth}\centering
		\begin{tikzpicture}[node distance=1.3cm,>=stealth',bend angle=45,auto]
			\node[place, tokens=1] (1a) {};
			\node[transition] (2a) [right of=1a] {}
			edge [pre] node[swap] {2} (1a);
			\node[place] (3a) [right of=2a] {}
			edge [pre] (2a);
		\end{tikzpicture}
		\caption{Before firing.}
		\label{fig:neg pre firing}
	\end{subfigure}
	~\hfill 
	\begin{subfigure}[b]{0.2\textwidth}\centering
		\begin{tikzpicture}[node distance=1.3cm,>=stealth',bend angle=45,auto]			
			\node[place, colored tokens={red}] (1a) {};
			\node[transition, label=above:$\blacktriangledown$] (2a) [right of=1a] {}
			edge [pre] node[swap] {2} (1a);
			\node[place, tokens=1] (3a) [right of=2a] {}
			edge [pre] (2a);
		\end{tikzpicture}
		\caption{After firing.}
		\label{fig:neg post firing}
	\end{subfigure}
	\caption{}
\end{figure}

\noindent
Semi-integer nets can be useful to model conflict resolution in concurrent behaviour. Petri nets are, in fact, good models for concurrent computation, but do not take into account what happens when the computation is shared by multiple agents over a non-ideal network. Consider, for instance, the net in Figure~\ref{fig:conflict petri}: Transitions $t_1$ and $t_2$ have to compete for the token in $p_1$ and they cannot both fire. Now suppose that there are two users, say $U_1$ and $U_2$, that can operate on the net, deciding which transition to fire. When a user takes a decision, it is broadcast over the network to the other user, and the overall state of the net is updated. In a realistic scenario, though, broadcasting over the network takes time: User $U_1$ could decide to fire $t_1$ and user $U_2$ could decide to fire $t_2$ while the broadcast choice of $U_1$ has still to be received, putting the overall net into an illegal state (Figure~\ref{fig:conflict petri what}).
\begin{figure}[h!]
	\centering
	\begin{subfigure}[b]{0.3\textwidth}\centering
		\begin{tikzpicture}[node distance=1cm,>=stealth',bend angle=45,auto]	
		\node[place, tokens=1] (1a) [label=left:$p_1$] {};
		\node[transition] (2a) [above right of=1a, label=right:$t_1$] {}
		edge [pre] (1a);
		\node[transition] (2b) [below right of=1a, label=right:$t_2$] {}
		edge [pre] (1a);
		\end{tikzpicture}
		\caption{}
		\label{fig:conflict petri}
	\end{subfigure}
	~\hfill 
	\begin{subfigure}[b]{0.3\textwidth}\centering
		\begin{tikzpicture}[node distance=1cm,>=stealth',bend angle=45,auto]	
		\node[place, label=center:$?$] (1a) [label=left:$p_1$] {};
		\node[transition] (2a) [above right of=1a, label=right:{$t_1$, fired by $U_1$}] {}
		edge [pre] (1a);
		\node[transition] (2b) [below right of=1a, label=right:{$t_2$, fired by $U_2$}] {}
		edge [pre] (1a);
		\end{tikzpicture}
		\caption{}
		\label{fig:conflict petri what}
	\end{subfigure}
	~\hfill 
	\begin{subfigure}[b]{0.3\textwidth}\centering
		\begin{tikzpicture}[node distance=1cm,>=stealth',bend angle=45,auto]			
		\node[place, colored tokens={red}] (1a) [label=left:$p_1$] {};
		\node[transition] (2a) [above right of=1a, label=right:{$t_1$, fired by $U_1$}] {}
		edge [pre] (1a);
		\node[transition] (2b) [below right of=1a, label=right:{$t_2$, fired by $U_2$}] {}
		edge [pre] (1a);
		\end{tikzpicture}
		\caption{}
		\label{fig:conflict integer petri}
	\end{subfigure}
	\caption{}
\end{figure}

\noindent
In such a situation we need a way to re-establish \emph{consensus}, that is, decide unambiguously in which legal state the net is. There are multiple ways to do this, but our main concern here is that the usual Petri net formalism does not have a way to represent illegal states, which is fundamental to attacking the problem. With integer states we are able to easily represent such a situation using negative tokens, as in Figure~\ref{fig:conflict integer petri}.
Re-establishing consensus from an illegal state then amounts to getting back to a state where the number of tokens in each place is non-negative.

Integer nets can instead be useful to model economic phenomena: Places can be seen as actors (or accounts) and tokens as entries in these accounts. Negative tokens then represent debit while positive tokens are credits, and transitions are bookkeeping events that convert between credits and debits. The characterization of economic phenomena in terms of process theories is object of a broader research that the Statebox team is carrying on along with multiple partners, and that also involves open games~\cite{Ghani2016}, macroeconomics~\cite{Winschel2010} and open systems~\cite{Sobocinski2010}.
\section{The category of executions of (semi-)integer Petri nets}\label{sec:executions}
Now we focus on defining executions for our nets. A suitable category of executions for $\PetriN$ has already been defined in~\cite{Sassone1995}, of which the work carried out in this section is a direct generalization. Most notably, the right categories of executions for $\PetriZZ$ and $\PetriZ$ will turn out to be compact closed, while the category of executions for $\PetriN$ is not. This is striking considering that $\PetriN$ and $\PetriZZ$ are the same category, and further highlights how the same category can be thought of in completely different ways.

The plan is as follows: We want to represent places as ``basic types'', states as monoidal products of these types (so, for instance, $A \otimes B \otimes A$ means ``a state with two tokens in $A$ and one in $B$") and transitions as morphisms between states. This tells us that our category has to be monoidal. Moreover, we want to represent negative tokens and the fact that they annihilate with positive ones, and to do so we resort to duals ($A^\star$ stands for ``$-1$ tokens in $A$''), cups and caps (representing creation/annihilation of tokens of the opposite sign). Thus, we realize that a compact closed category would be a good model to represent (semi-)integer net computations.

Counter-intuitively, we cannot require the monoidal product to be commutative, because doing so would disrupt the functorial relationship between Petri nets and their executions, as proven in~\cite[Thm. 2.2]{Sassone1995}. This will force us to do quite a lot of bookkeeping, such as keeping track of permutations in a state.
\begin{definition}
	Let $S$ be a set. $S$ can be seen as a discrete category, and one can build the free strict\footnote{Following the notation given in~\cite{KellyLaplaza1980}, by strict compact closed category we mean a category that is strict as a symmetric monoidal category, and for which the isomorphisms $(A \tensor B)^\star \simeq B^\star \tensor A^\star$, $\tensorUnit^\star \simeq \tensorUnit$ and ${A^\star}^\star \simeq A$ are all identities.} compact closed category generated by $S$, as shown in~\cite{KellyLaplaza1980}. Denote this category with $\mathcal{S}$. We define the strict compact closed category $\Gr{S}$ as $\mathcal{S}$ modulo the axioms
	\begin{equation}\label{eq:axiom}
		 \epsilon_{u^{-1}} = \sigma_{u^{-1}, u};\epsilon_{u} \qquad \eta_u;\sigma_{u^{-1},u};\epsilon_{u^{-1}} = \id{\tensorUnit} \qquad \forall u.(u \in \obj{\Gr{S}})
	\end{equation}
	Where $\sigma$ denotes symmetries and $\eta, \epsilon$ units and co-units, respectively (also called \emph{cups and caps}).
\end{definition}
\noindent
Given a set $S$, we will denote with $\IStrings{S}$ the set of finite strings of elements of $S \cup S^{-1}$, where $S^{-1}$ is the set of formal expressions $\suchthat{s^{-1}}{s \in S}$. It is easy to check that objects of $\Gr{S}$ are just elements of $\IStrings{S}$, that the monoidal product (denoted with $\tensor$) is just a concatenation of strings (with unit $\tensorUnit$ being the empty string), and that the dual of a string is its inverse when $\IStrings{S}$ is seen as the underlining set of the free group generated by $S$. Morphisms of $\Gr{S}$ are obtained as finite compositions/monoidal products of identities, binary swaps, cups and caps on elements.
\begin{remark}\label{rem:freeness of gr}
	Using the freeness of $\mathcal{S}$ one can moreover check that, for every strict compact closed category $\CategoryC$ for which the axioms in~\ref{eq:axiom} hold, and for each function $f:S \to \obj{\CategoryC}$, there is a unique strict symmetric monoidal functor $\overline{f}:\Gr{S} \to \CategoryC$, carrying the cups and caps of $\Gr{S}$ to the cups and caps of $\CategoryC$,  extending $f$.
\end{remark}
\begin{definition}
	An \emph{integer Petri category} is a strict compact closed category whose monoid of objects is the monoid $\IStrings{S}$ for some set $S$ such that, for each object, $A^\star = A^{-1}$ and the axioms in~\ref{eq:axiom} hold.
\end{definition}
\begin{definition}
	Given an integer Petri category $\CategoryC$, we call an arrow $\tau$ of $\CategoryC$ \emph{structural} if 
	\begin{equation*}
		\tau = \bigtensor_{i_1 = 1}^{m_1} \alpha_{i_1}^1; \dots ; \bigtensor_{i_n = 1}^{m_n} \alpha_{i_n}^n
	\end{equation*}
	With each $\alpha_{i_j}^j$ being an identity, a symmetry, a cup or a cap.
\end{definition}
\noindent
Intuitively, structural arrows are needed for the above-mentioned bookkeeping in modelling a net execution. We want to represent the fact that transitions consume tokens produced by other transitions by composing processes, but to do this we need symmetries to reorder the way we present tokens if needed, and cups/caps to represent the creation/annihilation of tokens of the opposite sign when they are in the same place.
\begin{definition}
	Given an integer Petri category $\CategoryC$, an arrow $\tau$ of $\CategoryC$ is \emph{primitive} if:
	\begin{itemize}
		\item $\tau$ is not structural;
		\item If $\tau = \alpha;\beta$ then $\alpha$ is structural and $\beta$ is primitive, or vice-versa;
		\item If $\tau = \alpha \tensor \beta$ then $\alpha = \id{\tensorUnit}$ and $\beta$ is primitive, or vice-versa.
	\end{itemize}
\end{definition}
\noindent
We would like primitive arrows to represent ``the actual transitions of the net''. To accomplish this we think of an arrow $\tau$ in $\CategoryC$ as primitive when it's not in the image of the functor  $\Gr{S} \to \CategoryC$ obtained lifting the obvious inclusion $S \inject \obj{\CategoryC}$. This is the best way to say that ``a primitive arrow is the smallest arrow that is not structural'', since structural arrows in $\CategoryC$ can be seen as ``always coming from $\Gr{S}$''.

As we will see shortly, this is not enough to reliably identify what stands for a transition in a Petri category, and we will have to refine this idea further to make it work.
\begin{lemma}
	There is an obvious mapping $\Multiplicity: \IStrings{S} \to \Iset{S}$ that associates to each string $s \in \IStrings{S}$ an integer multiset $S \to \integers$:
	\begin{equation*}
		\Multiplicity(s)(p) := \text{ Occurrences of $p$ in $s$ } - \text{ Occurrences of $p^{-1}$ in $s$}
	\end{equation*}
\end{lemma}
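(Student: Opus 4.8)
The plan is to produce $\Multiplicity$ via the universal property of free monoids, which delivers well-definedness together with compatibility with the algebraic structure in one stroke, and then to check the single genuinely required condition: that the output always has finite support, so that it is a legitimate element of $\Iset{S}$ rather than an arbitrary function $S \to \integers$.

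First I would define a function $g$ on the generating set $S \cup S^{-1}$ by $g(p) = \delta_p$ and $g(p^{-1}) = -\delta_p$, where $\delta_p \colon S \to \integers$ is the multiset taking the value $1$ at $p$ and $0$ elsewhere; each $\delta_p$ clearly lies in $\Iset{S}$. Since $\IStrings{S}$ is by definition the set of finite strings over $S \cup S^{-1}$, i.e.\ the free monoid on $S \cup S^{-1}$ under concatenation with the empty string as unit, and since $\Iset{S}$ is a commutative monoid under pointwise addition, the universal property of the free monoid yields a unique monoid homomorphism $\overline{g} \colon \IStrings{S} \to \Iset{S}$ extending $g$. A one-line induction on the length of a string $s = x_1 \dots x_n$ shows $\overline{g}(s) = \sum_{i} g(x_i)$, and evaluating at a place $p$ contributes $+1$ for each letter $x_i$ equal to $p$ and $-1$ for each letter equal to $p^{-1}$; this is exactly the stated formula, so $\overline{g} = \Multiplicity$. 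In particular $\Multiplicity$ carries concatenation of strings to addition of signed multisets.

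The only point that deserves explicit mention is finiteness of support, and it is immediate: a string $s$ has finitely many letters, so at most finitely many places $p$ occur in $s$, and for every other place $\Multiplicity(s)(p) = 0$; hence $\Multiplicity(s) \in \Iset{S}$ as claimed. There is no real obstacle in the argument. The conceptually relevant observation, rather than any difficulty, is that $\IStrings{S}$ is taken as a free monoid of strings and not as the free group, so $p\,p^{-1}$ and the empty string remain distinct objects while $\Multiplicity$ sends both to $0$; thus $\Multiplicity$ is deliberately non-injective, forgetting ordering and uncancelled dual pairs and recording only the net signed count in each place, which is precisely the data needed to connect the object monoid of an integer Petri category to the signed states of its underlying net.
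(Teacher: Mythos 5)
Your proof is correct. Note, however, that the paper offers no proof of this lemma at all: it is phrased as an ``obvious mapping'' and is not among the statements proved in the appendix, the implicit argument being exactly your final observation --- a finite string has finitely many letters, so the signed count of occurrences is a well-defined function $S \to \integers$ with finite support. Your route through the universal property of the free monoid on $S \cup S^{-1}$ is a sound formalization of this, and it buys something the bare statement does not ask for but the paper relies on tacitly later (for instance in the proof that $\Gr{\CategoryC, \nu}$ is a subcategory, where $\Multiplicity$ must be insensitive to concatenation, reordering, and insertion or removal of $p\,p^{-1}$ pairs): namely that $\Multiplicity$ is a monoid homomorphism carrying concatenation of strings to addition of signed multisets. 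Your closing remark is also the correct reading of the paper's conventions: $\IStrings{S}$ is the set of strings over $S \cup S^{-1}$ (the underlying set of the free group, but not quotiented by cancellation), so $p\,p^{-1}$ and the empty string are distinct objects and $\Multiplicity$ is deliberately non-injective, which is precisely what makes the categories $\Gr{\CategoryC,\nu}$ nontrivial.
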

\begin{restatable}{lemma}{structuralsubcategory}
	Let $\CategoryC$ be an integer Petri category and let $\IStrings{S}$ be its monoid of objects. Given an integer multiset $\nu:S \to \integers$, consider the set of objects of $\CategoryC$ such that their image through $\Multiplicity$ is $\nu$, along with structural arrows between them. This gives a subcategory of $\CategoryC$, denoted with $\Gr{\CategoryC, \nu}$.
\end{restatable}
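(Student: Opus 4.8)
The plan is to reduce the statement to two observations about structural arrows and then combine them. First I would record that, taken over all objects of $\CategoryC$, the structural arrows form a wide subcategory. Every identity is structural, since for $A = a_1 \tensor \cdots \tensor a_k$ we may write $\id{A} = \id{a_1} \tensor \cdots \tensor \id{a_k}$, a single layer of the form $\bigtensor \alpha$ with each $\alpha$ an identity; and the composite of two structural arrows is structural, because concatenating the layer decompositions $(\bigtensor\alpha^1);\cdots;(\bigtensor\alpha^n)$ and $(\bigtensor\beta^1);\cdots;(\bigtensor\beta^m)$ yields an arrow of exactly the same shape. Thus closure under composition and the presence of identities are immediate from the normal form in the definition of structural arrow.

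The substantive observation is that $\Multiplicity$ is constant along structural arrows: if $\tau:A\to B$ is structural then $\Multiplicity(A)=\Multiplicity(B)$. Because occurrences add under concatenation, $\Multiplicity:\IStrings{S}\to\Iset{S}$ is a monoid homomorphism, so it is enough to verify the claim on each generator and then propagate it through tensors (within a layer) and composites (across layers). For an identity or a symmetry $\sigma$, domain and codomain are the same string up to reordering and $\Multiplicity$ ignores order, so nothing changes. The decisive cases are the cup $\eta_u$ and the cap $\epsilon_u$. Here I use that $\CategoryC$ is an integer Petri category, so $u^\star = u^{-1}$: a cup then has the form $\tensorUnit \to u\tensor u^{-1}$ and a cap the form $u\tensor u^{-1}\to\tensorUnit$ (up to order), and on the non-unit side $\Multiplicity$ evaluates to $(+1)+(-1)=0$ at $u$ and to $0$ elsewhere, matching $\Multiplicity(\tensorUnit)=0$. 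Hence $\Multiplicity$ is preserved by every generator, and therefore by every structural arrow.

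Combining the two, the claimed data is the intersection of the wide subcategory of structural arrows with the full subcategory of $\CategoryC$ on the objects $\Multiplicity^{-1}(\nu)$, and this intersection is a subcategory: for $A\in\Multiplicity^{-1}(\nu)$ the identity $\id{A}$ is structural with both endpoints in the fiber; and if $\tau:A\to B$ and $\rho:B\to C$ are structural with $A,B,C\in\Multiplicity^{-1}(\nu)$, then $\tau;\rho$ is structural and its endpoints $A,C$ lie in the fiber, so it again belongs to $\Gr{\CategoryC,\nu}$. This establishes the lemma. As a bonus, the invariance shows that no structural arrow joins two distinct fibers, so the structural subcategory is the disjoint union of the $\Gr{\CategoryC,\nu}$ over $\nu\in\Iset{S}$; this is what makes singling out each $\Gr{\CategoryC,\nu}$ meaningful.

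I expect the only point requiring genuine care — the main obstacle — to be the cup/cap step of the $\Multiplicity$-invariance, precisely because it is where the integer Petri category hypothesis $A^\star=A^{-1}$ is indispensable. If duals were not identified with group inverses, a cap $u\tensor u^\star\to\tensorUnit$ would not cancel under $\Multiplicity$, and the collection could fail to be closed; the rest of the argument is bookkeeping with the normal form.
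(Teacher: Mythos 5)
Your proof is correct and follows essentially the same route as the paper's: check that identities, symmetries, cups and caps all preserve $\Multiplicity$ (the cup/cap case being where $A^\star = A^{-1}$ matters), deduce that structural arrows never leave a fiber of $\Multiplicity$, and conclude closure under composition and existence of identities. Your packaging of this as the intersection of the wide structural subcategory with the fiber $\Multiplicity^{-1}(\nu)$ is a slightly cleaner organization of the same argument, not a different proof.
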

\noindent
If $\Iset{S}$ can be seen as the free abelian group on $S$, $\IStrings{S}$ clearly stands for the free group on $S$. $\Multiplicity$ acts identifying all the objects of $\IStrings{S}$ that would end up being identified if we were to quotient it by introducing commutativity. $\Gr{\CategoryC, \nu}$ then is the category of all possible operations that we can make on an element in $\IStrings{S}$ without changing the equivalence class it is sent to, viz. all the possible bookkeeping we can do on a object without altering the net state it corresponds to.

The following definitions are a direct generalization of the ones given in~\cite{Sassone1995}.
\begin{definition}
	Let $\CategoryC$ be an integer Petri category and let $\IStrings{S}$ be its monoid of objects. Given integer multisets $\nu, \nu':S \to \integers$, a \emph{\underline{transition}}\footnote{To be unambiguous, we will distinguish transitions in the net context from \underline{transitions} in the categorical context by underlining.} of $\CategoryC$ is a natural transformation $\tau: \pi_{\CategoryC, \nu} \to \pi_{\CategoryC, \nu'}$ whose components are all primitive, where $\pi_{\CategoryC, \nu}$ and $\pi_{\CategoryC, \nu'}$ are the obvious compositions of projection and inclusion functors:
	\begin{equation*}
		\pi_{\CategoryC, \nu} : \Gr{\CategoryC, \nu'} \cartesianTensor \Gr{\CategoryC, \nu} \xrightarrow{\pi_1} \Gr{\CategoryC, \nu} \inject \mathcal{C} \qquad 	\pi_{\CategoryC, \nu'} : \Gr{\CategoryC, \nu'} \cartesianTensor \Gr{\CategoryC, \nu} \xrightarrow{\pi_2} \Gr{\CategoryC, \nu'} \inject \mathcal{C}
	\end{equation*}
	We say that a strong monoidal functor between Petri categories $F:\CategoryC \to \CategoryD$ \emph{preserves \underline{transitions}} if $F$ carries structural arrows to structural arrows and for each \underline{transition} $\tau$ of $\CategoryC$ there is a \underline{transition} $\theta$ of $\CategoryD$ such that $F\tau_{u,v} = \theta_{Fu,Fv}$, where by $\tau_{u,v}$ we denote the components of $\tau$ (similarly for $\theta$).
\end{definition}
\noindent
As one can imagine, \underline{transitions} in the previous definition represent the actual transitions of a given Petri net in its category of executions. To see this note that if we want to represent a transition as a process, then clearly we do not want to consider it as a different one if we permute the objects in its input or output. This is because in a Petri net a transition only cares about the number of tokens it consumes (produces) from (in) a place, and not about the order in which these tokens are received (sent). Similarly, we want to ``ignore'' all the pairs of type $s, s^{-1}$ showing up in the process input and output, since these couples correspond to a $0$ when translated to multisets, meaning that again in the Petri net formalism these pairs are ``not seen'' by any transition. 

Clearly in the world of categories things are different, since morphisms of a category are sensitive to object order and/or presence of object+dual pairs. Requiring a morphism to be indifferent to this amounts exactly to asking that it commutes with swaps, cups and caps. In categorical terms it means requiring that the morphism is the component of a natural transformation between functors expressing the action of commuting and introducing/removing such pairs.

Finally, we see that if transitions in a net become natural transformations in the corresponding category of executions, then we want a notion of morphism between these categories that corresponds, functorially, to the notion of morphism we have between nets. Since a morphism between nets sends transitions to transitions, the natural requirement in the category of executions is that morphisms (viz. functors) preserve the natural transformations that are \underline{transitions}.

\begin{definition}
	Given two integer Petri categories $\CategoryC, \CategoryD$ we define the relation $\mathfrak{R}_{\CategoryC, \CategoryD}$ on \underline{transition}-preserving functors $F,G:\CategoryC \to \CategoryD$ saying that $F,G \in \mathfrak{R}_{\CategoryC, \CategoryD}$ if there are natural transformations $\tau:F \to G$ and $\tau':G \to F$ such that their components are all structural arrows. For each couple $\CategoryC, \CategoryD$, $\mathfrak{R}_{\CategoryC, \CategoryD}$ is an equivalence relation. Moreover,
	\begin{equation*}
		(F,G) \in \mathfrak{R}_{\CategoryC, \CategoryD} \wedge (F',G') \in \mathfrak{R}_{\CategoryD,\mathcal{E}} \implies (F;F',G;G') \in \mathfrak{R}_{\CategoryC, \mathcal{E}}
	\end{equation*}
\end{definition}
\noindent
This definition is, again, in line with the idea that we want to characterize functors only by looking at what they do to \underline{transitions}. If they differ only in the way they handle the bookkeeping morphisms, then they should be regarded as the same. It is easy to check that identity functors preserve \underline{transitions}, as does composition of \underline{transition}-preserving functors. We moreover have:
\begin{restatable}{lemma}{frakrisacongruence}
	$\mathfrak{R}$ is a congruence. If $F: \CategoryC \to \CategoryD$ preserves \underline{transitions} and $\theta_{Fu,Fv} = F\tau_{u,v} = \theta'_{Fu,Fv}$, then $\theta' = \theta'$ .  Moreover, if $(F,G) \in \mathfrak{R}_{\CategoryC,\CategoryD}$ and $F, G$ preserve \underline{transitions}, $F\tau_{u,v} = \theta_{Fu,Fv}$ iff $G\tau_{u,v} = \theta_{Gu,Gv}$.
\end{restatable}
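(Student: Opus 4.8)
The plan is to treat the three assertions in turn, all of them resting on a single structural input: for every integer multiset $\nu$ the subcategory $\Gr{\CategoryC,\nu}$ introduced above is a \emph{connected groupoid}. Connectedness holds because any two strings with the same image under $\Multiplicity$ are related by symmetries (to bring a letter and its inverse adjacent) and by caps and cups (to delete such a pair), exactly mirroring reduction in the free group $\IStrings{S}$; invertibility of the arrows follows from the axioms in~\ref{eq:axiom}, which force each cup and cap to have a structural two-sided inverse within its multiset class. I would take this as established (it is the analogue of the symmetry groupoid of~\cite{Sassone1995}) and flag it as the crux, since every subsequent step reduces to a routine naturality chase once it is available.

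First I would verify that $\mathfrak{R}$ is a congruence. That each $\mathfrak{R}_{\CategoryC,\CategoryD}$ is an equivalence relation is immediate: reflexivity is witnessed by the identity natural transformation, whose components $\id{Fu}$ are structural; symmetry by swapping the two witnessing natural transformations in the definition; and transitivity by vertically composing the witnesses, using that structural arrows are closed under composition (a composite of composites of tensors of identities, symmetries, cups and caps is again of that form). For compatibility with composition, given $(F,G)\in\mathfrak{R}_{\CategoryC,\CategoryD}$ witnessed by $\tau,\tau'$ and $(F',G')\in\mathfrak{R}_{\CategoryD,\CategoryE}$ witnessed by $\sigma,\sigma'$, I would take the horizontal composites $\sigma\ast\tau:F;F'\to G;G'$ and $\sigma'\ast\tau':G;G'\to F;F'$. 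The component $(\sigma\ast\tau)_u = F'(\tau_u);\sigma_{Gu}$ is structural, because $F'$ preserves \underline{transitions} and hence sends the structural $\tau_u$ to a structural arrow, $\sigma_{Gu}$ is structural, and structural arrows compose; since composites of \underline{transition}-preserving functors preserve \underline{transitions}, both $F;F'$ and $G;G'$ lie in the right class, giving $(F;F',G;G')\in\mathfrak{R}_{\CategoryC,\CategoryE}$.

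For the uniqueness clause (reading the misprint $\theta'=\theta'$ as $\theta=\theta'$), the key observation is that a \underline{transition} is determined by any single one of its components. Indeed, for objects $(a_0,b_0)$ and $(a,b)$ of $\Gr{\CategoryD,\mu'}\cartesianTensor\Gr{\CategoryD,\mu}$, connectedness supplies structural isomorphisms $s:a_0\to a$ and $t:b_0\to b$, and naturality of $\theta$ at $(s,t)$ yields $\theta_{a,b}=s^{-1};\theta_{a_0,b_0};t$. Now since $\tau$ is a \underline{transition} of type $\nu\to\nu'$, the classes $\Gr{\CategoryC,\nu}$ and $\Gr{\CategoryC,\nu'}$ are nonempty; choosing $u_0,v_0$ in them and using that $F$ is monoidal produces an object $(Fu_0,Fv_0)$ of the indexing category of $\theta$ at which, by hypothesis, $\theta_{Fu_0,Fv_0}=F\tau_{u_0,v_0}=\theta'_{Fu_0,Fv_0}$. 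As $\theta$ and $\theta'$ then agree on one component, they agree on all of them, so $\theta=\theta'$.

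Finally, for the equivalence under $\mathfrak{R}$, let $(F,G)\in\mathfrak{R}_{\CategoryC,\CategoryD}$ be witnessed by $\zeta:F\to G$ with structural components, and suppose $F\tau_{u,v}=\theta_{Fu,Fv}$ for all $u,v$. Because $\zeta_u:Fu\to Gu$ is structural, $Fu$ and $Gu$ share a multiset, so $(\zeta_u,\zeta_v)$ is a morphism in the indexing category of $\theta$. Combining naturality of $\zeta$ at $\tau_{u,v}$, namely $\zeta_u;G\tau_{u,v}=F\tau_{u,v};\zeta_v$, with naturality of $\theta$ at $(\zeta_u,\zeta_v)$, namely $\zeta_u;\theta_{Gu,Gv}=\theta_{Fu,Fv};\zeta_v$, and substituting the hypothesis $F\tau_{u,v}=\theta_{Fu,Fv}$, I would obtain $\zeta_u;G\tau_{u,v}=\zeta_u;\theta_{Gu,Gv}$; cancelling the invertible $\zeta_u$ gives $G\tau_{u,v}=\theta_{Gu,Gv}$. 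The reverse implication follows symmetrically from the witness $\zeta':G\to F$, establishing the stated ``iff''. The hard part, as noted, is exactly the groupoid-and-connectedness fact invoked at the outset: once it is in hand the rest is bookkeeping, but without invertibility of structural arrows neither the single-component determination of Part two nor the cancellation of $\zeta_u$ here would go through.
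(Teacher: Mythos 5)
Your treatment of the congruence claim is correct and is essentially the paper's own argument: horizontal composition of the two witnessing natural transformations, with structurality of the composite components coming from the fact that \underline{transition}-preserving functors carry structural arrows to structural arrows. The genuine problem is the ``single structural input'' you rest everything else on: $\Gr{\CategoryC,\nu}$ is connected, but it is \emph{not} a groupoid, and the axioms in~\ref{eq:axiom} do not make cups and caps invertible. What those axioms give is $\eta_u;\sigma_{u^{-1},u};\epsilon_{u^{-1}}=\id{\tensorUnit}$, i.e.\ (using $\epsilon_{u^{-1}}=\sigma_{u^{-1},u};\epsilon_u$) that the circle $\eta_u;\epsilon_u$ equals $\id{\tensorUnit}$, so $\eta_u$ is split mono and $\epsilon_u$ is split epi. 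Two-sided invertibility would require $\epsilon_u;\eta_u=\id{u\tensor u^{-1}}$, and this fails: $\epsilon_u;\eta_u$ is idempotent (precisely because circles are trivial), and it is not the identity --- in the evident matching model of $\Gr{S}$, which is compact closed, satisfies~\ref{eq:axiom}, and receives $\Gr{S}$ by Remark~\ref{rem:freeness of gr}, it pairs the two inputs with each other rather than with the outputs --- and a non-identity idempotent can never be invertible. Your appeal to the symmetry groupoid of~\cite{Sassone1995} is exactly where the analogy breaks: there the structural arrows are permutations only, and adding the compact structure is what destroys invertibility.

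This costs you little in the uniqueness claim but is fatal, as written, in the final ``iff''. For uniqueness, the paper's proof shows how to avoid inverses: connectedness holds in \emph{both} directions (permuting and inserting/deleting a pair are mutually reversible moves), so one chooses structural arrows $\gamma\colon a\to a_0$ and $\gamma'\colon b_0\to b$ and uses two naturality squares to get $\theta_{a,b}=\gamma;\theta_{a_0,b_0};\gamma'$; your $s^{-1}$ never has to exist. For the last claim, your derivation of $\zeta_u;G\tau_{u,v}=\zeta_u;\theta_{Gu,Gv}$ is fine, but the cancellation of $\zeta_u$ is not: structural arrows are not left-cancellable (take $\zeta=\epsilon_u;\eta_u$, $f=\id{u\tensor u^{-1}}$, $g=\epsilon_u;\eta_u$; then $\zeta;f=\zeta;g$ while $f\neq g$). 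The repair uses the hypothesis you never invoke, namely that $G$ also preserves \underline{transitions}: there is a \underline{transition} $\theta''$ of $\CategoryD$ with $G\tau_{u,v}=\theta''_{Gu,Gv}$. Now rewrite both sides of your equation by naturality at $(\zeta_u,\id{Gv})$: the left side is $\zeta_u;\theta''_{Gu,Gv}=\theta''_{Fu,Gv}$ and the right side is $\zeta_u;\theta_{Gu,Gv}=\theta_{Fu,Gv}$. Hence $\theta''$ and $\theta$ share a component, so by the uniqueness part $\theta''=\theta$, giving $G\tau_{u,v}=\theta_{Gu,Gv}$; the converse is symmetric via $\zeta'$. (The paper dismisses this last claim as ``obvious'' and gives no argument, but this absorb-into-a-shared-component chase is what its uniqueness machinery is set up for; cancellation is simply not available in these categories.)
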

\noindent
This is enough to ensure that the following definition is correct:
\begin{definition}
	We define the category of generalized Petri executions, $\GExPetri$, as having integer Petri categories as objects and \underline{transition}-preserving functors modulo $\mathfrak{R}$ as morphisms. Note that objects of $\GExPetri$ are compact closed categories, but $\GExPetri$ is not compact closed itself.
\end{definition}
\noindent
Now we are finally ready to prove the main theorem of this work, that functorially associates, to each integer Petri net, a semantics representing its computations.
\begin{restatable}{theorem}{maintheorem}
	Consider a net $N \eqdef \Net{N} \in \obj{\PetriZ}$. We can associate to $N$ the strict compact closed category including $\Gr{P_N}$ as a subcategory, plus the arrows defined by the following inference rules:
	\begin{gather*}
	\frac{t \in T_N}{t_{u,v} \in \Hom{\Fold{N}}{u}{v}} \quad \forall u,v.(\Multiplicity(u)= \Pin{t} \wedge \Multiplicity(v)= \Pout{t})\\
	\\
	\frac{\alpha \in \Hom{\Fold{N}}{u}{v}, \,\, \beta \in \Hom{\Fold{N}}{u'}{v'}}{\alpha \tensor \beta \in \Hom{\Fold{N}}{u \tensor u'}{v \tensor v'}} 
	\qquad 		
	\frac{\alpha \in \Hom{\Fold{N}}{u}{v}, \,\, \beta \in \Hom{\Fold{N}}{v}{w}}{\alpha ; \beta \in \Hom{\Fold{N}}{u}{w}} 
	\end{gather*}
	in which the following family of axioms holds:
	\begin{equation}\label{eq: axiom}
	p;t_{u',v'} = t_{u,v};q \qquad  \forall p, q. (p \in \Hom{\Gr{P_N}}{u}{u'} \wedge q \in \Hom{\Gr{P_N}}{v}{v'})
	\end{equation}
	This correspondence can be extended to a functor $\Fold{-}: \PetriZ \to \GExPetri$. 
	
	Similarly, to each category $\CategoryC \in \obj{\PetriZ}$ we can associate the net $\Net{N}$, where:
	\begin{itemize}
		\item $P_N$ is the generating set of $\obj{\CategoryC}$.
		\item $T_N \eqdef \bigcup_{\nu,\nu' \in \Iset{P_N}}\suchthat{\tau \in \Nat{}{\pi_{\CategoryC, \nu}}{\pi_{\CategoryC, \nu'}}}{\text{$\tau$ is a \underline{transition}}}$
		\item $\Pin{\tau: \pi_{\CategoryC, \nu} \to \pi_{\CategoryC, \nu'}} = \nu$
		\item $\Pout{\tau: \pi_{\CategoryC, \nu} \to \pi_{\CategoryC, \nu'}} = \nu'$
	\end{itemize}
	This correspondence can again be extended to a functor $\UnFold{-}: \GExPetri \to \PetriZ$, producing an adjunction $\Fold{-} \vdash \UnFold{-}$.
\end{restatable}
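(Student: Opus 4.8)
The plan is to treat $\Fold{N}$ as a presentation by generators and relations and to read the entire statement off the resulting universal property. First I would check that $\Fold{N}$ is a genuine object of $\GExPetri$. The inference rules generate a strict compact closed category containing $\Gr{P_N}$ as the subcategory of structural arrows, and the family of axioms in~(\ref{eq: axiom}) is precisely the naturality condition making each family $\{t_{u,v}\}$ the components of a $\underline{\text{transition}}$ $\pi_{\Fold{N},\Pin{t}} \to \pi_{\Fold{N},\Pout{t}}$: the arrows $t_{u,v}$ are primitive by construction, and commuting them past all $p,q \in \Gr{P_N}$ says exactly that they are indifferent to permutations and to object-plus-dual pairs. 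Hence $\Fold{N}$ is an integer Petri category. For functoriality, given a net morphism $\langle f,g\rangle : N \to M$, I would use the freeness of $\Gr{P_N}$ (Remark~\ref{rem:freeness of gr}) to extend a chosen lift of $g$ along generators to a strict symmetric monoidal functor on the structural part, and send each generator $t_{u,v}$ to $(ft)_{Fu,Fv}$; the net-morphism equations $f;\Pin{-}_M = \Pin{-}_N;g$ guarantee that source and target multisets match, so the naturality axioms are preserved. The lift of $g$ from $\Iset{P_N}$ to the string level $\IStrings{P_N}$ is only canonical up to reordering, but two choices differ by structural natural isomorphisms, hence agree modulo $\mathfrak{R}$; the congruence lemma then upgrades ``preserves identities and composites up to $\mathfrak{R}$'' to honest functoriality of $\Fold{-}$ into $\GExPetri$.

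For $\UnFold{-}$ the object assignment is the one displayed in the statement, and it is well defined: the generating set of $\obj{\CategoryC}$ is the $S$ with $\IStrings{S} = \obj{\CategoryC}$, the transitions are the $\underline{\text{transitions}}$ of $\CategoryC$, and their inputs and outputs are the multisets $\nu,\nu'$ indexing the source and target projection functors. On morphisms, a $\underline{\text{transition}}$-preserving functor $F$ acts on generators of the object monoid, inducing by $\Multiplicity$ a group homomorphism $\Iset{P_{\UnFold{\CategoryC}}} \to \Iset{P_{\UnFold{\CategoryD}}}$, and sends each $\underline{\text{transition}}$ $\tau$ to the $\theta$ determined by $F\tau_{u,v} = \theta_{Fu,Fv}$; the congruence lemma shows this $\theta$ is unique and, crucially, that $\mathfrak{R}$-equivalent $F,G$ select the same $\theta$, so $\UnFold{-}$ descends to $\GExPetri$-morphisms. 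The net-morphism equations follow because $F$ intertwines $\Multiplicity$ with its action on objects, matching inputs and outputs on the nose.

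With both functors in hand I would prove the adjunction $\Fold{-} \vdash \UnFold{-}$ by exhibiting a natural bijection
\begin{equation*}
\Hom{\GExPetri}{\Fold{N}}{\CategoryC} \;\cong\; \Hom{\PetriZ}{N}{\UnFold{\CategoryC}}.
\end{equation*}
The forward map restricts a $\underline{\text{transition}}$-preserving functor to generators and $\underline{\text{transitions}}$, exactly as in the construction of $\UnFold{-}$ on arrows, producing a net morphism $N \to \UnFold{\CategoryC}$. The backward map uses the generators-and-relations description of $\Fold{N}$: a net morphism $\langle \phi,\psi\rangle : N \to \UnFold{\CategoryC}$ supplies an object assignment $P_N \to \obj{\CategoryC}$ together with, for each $t \in T_N$, a $\underline{\text{transition}}$ of $\CategoryC$ with the prescribed input and output, and this is precisely the data needed to extend, uniquely up to $\mathfrak{R}$, to a $\underline{\text{transition}}$-preserving functor $\Fold{N} \to \CategoryC$ respecting~(\ref{eq: axiom}). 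That the two assignments are mutually inverse and natural in $N$ and $\CategoryC$ is then a bookkeeping check. Equivalently, one can package this as a unit $\eta_N : N \to \UnFold{\Fold{N}}$, an isomorphism of nets since folding introduces the $t$'s as the only primitive arrows and unfolding reads them back, and a counit $\varepsilon_{\CategoryC} : \Fold{\UnFold{\CategoryC}} \to \CategoryC$, the comparison functor from the free execution category on the extracted net, and verify the triangle identities; $\varepsilon$ need not be invertible because $\CategoryC$ may impose relations among its transitions that the free $\Fold{\UnFold{\CategoryC}}$ does not.

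The main obstacle is controlling everything modulo $\mathfrak{R}$ and, underlying this, a normal-form analysis of arrows in $\Fold{N}$. Every arrow decomposes into alternating structural and primitive pieces, and the naturality axioms~(\ref{eq: axiom}) are exactly what let one slide the structural pieces (permutations, cups and caps) past the primitive ones; the content is that after this sliding the effect of a functor out of $\Fold{N}$ is pinned down, up to $\mathfrak{R}$, by its values on $P_N$ and on the transition generators. The delicate points are that distinct net transitions yield distinct $\underline{\text{transitions}}$, so that $\eta_N$ is injective on transitions and no spurious ones are created, and that the choice of lift from $\Iset{P_N}$ to $\IStrings{P_N}$ never matters beyond an $\mathfrak{R}$-class. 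This is precisely the place where the passage from the setting of~\cite{Sassone1995} to the present compact closed setting must be carried out with care, since duals, cups and caps enlarge the supply of structural arrows one has to commute past.
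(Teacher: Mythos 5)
Your proposal is correct and follows essentially the same route as the paper: the same generators-and-relations reading of $\Fold{N}$, the same lift of $g$ through a right inverse of $\Multiplicity$ made well-defined modulo $\mathfrak{R}$ via Remark~\ref{rem:freeness of gr} and the congruence lemma, the same extraction of $\langle F_{tr}, F_{pl}\rangle$ for $\UnFold{-}$, and the same unit (an iso of nets) and counit (the comparison functor, identity on objects and structural arrows) for the adjunction. The hom-set bijection you lead with is just the standard repackaging of the unit/counit data the paper verifies directly.
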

\noindent
Finally, we can restrict the category $\GExPetri$ and the functor $\UnFold{-}$ to get the following:
\begin{restatable}{corollary}{firstequivalence}
	Call $\ExPetriZ$ the full subcategory of $\GExPetri$ consisting of integer Petri categories whose morphisms are generated only from compositions and monoidal products of symmetries, cups, caps and \underline{transition} components, modulo the compact closed categories axioms, the axioms in~\ref{eq:axiom} and the axioms in~\ref{eq: axiom}. Then $\PetriZ \approx \ExPetriZ$.
\end{restatable}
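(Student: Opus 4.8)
The plan is to promote the adjunction $\Fold{-} \vdash \UnFold{-}$ of the previous theorem to an adjoint equivalence by corestricting $\Fold{-}$ and restricting $\UnFold{-}$ to the full subcategory $\ExPetriZ$, then invoking the standard fact that an adjunction is an equivalence exactly when its unit and counit are natural isomorphisms. The first thing to check is that $\Fold{-}$ genuinely lands in $\ExPetriZ$: reading off the inference rules defining $\Fold{N}$, every morphism is a composite and monoidal product of the generators supplied by $\Gr{P_N}$ -- that is, symmetries, cups and caps -- together with the \underline{transition} components $t_{u,v}$, quotiented by the compact closed axioms, those in~\ref{eq:axiom} and those in~\ref{eq: axiom}. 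This is verbatim the defining condition for an object of $\ExPetriZ$, so $\Fold{N} \in \obj{\ExPetriZ}$ and the restricted adjunction is well posed.

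Next I would show the unit $\eta_N \colon N \to \UnFold{\Fold{N}}$ is an isomorphism of integer nets. On places it is the identity on $P_N$, since the generating set of $\obj{\Fold{N}}$ is exactly $P_N$. On transitions, each $t \in T_N$ determines a \underline{transition} of $\Fold{N}$, namely the family $\{t_{u,v}\}$ whose naturality is precisely the family of axioms~\ref{eq: axiom} and whose components are primitive; conversely, since $\Fold{N}$ is generated only by structural arrows and the $t_{u,v}$, every primitive arrow reduces, after stripping structural pre- and post-composition, to a single $t_{u,v}$, so every \underline{transition} of $\Fold{N}$ arises from a unique $t$. The side conditions $\Multiplicity(u) = \Pin{t}$ and $\Multiplicity(v) = \Pout{t}$ ensure the input/output data match. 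Thus $\eta_N$ is a bijection on places and transitions commuting with $\Pin{-}$ and $\Pout{-}$, hence an iso; much of this bookkeeping is already discharged in the proof of the preceding theorem.

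The crux is the counit $\epsilon_{\CategoryC} \colon \Fold{\UnFold{\CategoryC}} \to \CategoryC$, which I claim is an isomorphism in $\ExPetriZ$ -- i.e. invertible modulo $\mathfrak{R}$ -- precisely because $\CategoryC \in \ExPetriZ$. On objects it is the identity on the common object monoid $\IStrings{S}$, with $S$ the generating set. On morphisms, membership of $\CategoryC$ in $\ExPetriZ$ asserts that every arrow of $\CategoryC$ is generated from symmetries, cups, caps and \underline{transition} components modulo exactly the axioms in~\ref{eq:axiom} and~\ref{eq: axiom}; but this is precisely the generators-and-relations presentation of $\Fold{\UnFold{\CategoryC}}$, and $\epsilon_{\CategoryC}$ carries generators to generators. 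Hence $\epsilon_{\CategoryC}$ is full, and faithful modulo $\mathfrak{R}$ because the two sides are quotiented by the same relations, so it is invertible in $\GExPetri$. This step is exactly where the restriction does its work: for a general object of $\GExPetri$ the morphisms need not be generated in this way, so the counit can fail to be iso, which is why the equivalence holds only after passing to $\ExPetriZ$.

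With both unit and counit natural isomorphisms, the restricted pair $\Fold{-}, \UnFold{-}$ is an adjoint equivalence, yielding $\PetriZ \approx \ExPetriZ$. I expect the main obstacle to be the faithfulness half of the counit argument: one must check that no relations hold in $\CategoryC$ beyond those generated by the compact closed axioms together with~\ref{eq:axiom} and~\ref{eq: axiom}, so that the presentation of $\CategoryC$ really coincides with that of $\Fold{\UnFold{\CategoryC}}$. Tied to this is the delicate characterization of the \underline{transitions} of $\Fold{N}$ used for the unit -- the point the paper already flagged, that primitivity alone does not pin down a net transition and must be supplemented by the naturality condition defining a \underline{transition}.
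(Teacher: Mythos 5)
Your proposal is correct and takes essentially the same route as the paper: its own proof simply restricts the adjunction $\Fold{-} \vdash \UnFold{-}$ from the main theorem and observes that the counit $\epsilon_{\CategoryC}$ is an isomorphism exactly when $\CategoryC \in \ExPetriZ$, declaring this obvious from the definitions. Your write-up fills in what the paper leaves implicit -- that $\Fold{-}$ lands in $\ExPetriZ$, that the unit is already an iso by the main theorem's construction, and the generators-and-relations matching behind the counit claim -- but the underlying argument is the same.
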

\noindent
We found a suitable category to represent executions of integer Petri nets. Now we want to find a subcategory of $\ExPetriZ$ that can represent executions of semi-integer nets. This is indeed easy, and amounts to putting an obvious requirement on what \underline{transitions} look like:
\begin{definition}
	We call an object $\CategoryC$ of $\ExPetriZ$ \emph{positive} if each \underline{transition} in $\CategoryC$ is of type $\tau: \pi_{\CategoryC, \nu} \to \pi_{\CategoryC, \nu'}$ with all the elements in the images of $\nu, \nu'$ being $\geq 0$. Positive objects of $\ExPetriZ$ and morphisms between them form a full subcategory, denoted with $\ExPetriZZ$.
\end{definition}
\begin{restatable}{theorem}{secondequivalence}
	There is an equivalence $\PetriZZ \approx \ExPetriZZ$, and thus an equivalence
	\begin{equation}
		\ExPetriN \approx \PetriN = \PetriZZ \approx \ExPetriZZ
	\end{equation}
	Where $\ExPetriN$ and $\PetriN$ are the categories denoted as \underline{\textbf{PSSMC}} and \underline{\textbf{Petri}}, respectively, in~\cite{Sassone1995}.
\end{restatable}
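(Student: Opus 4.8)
The plan is to isolate the only genuinely new ingredient, the equivalence $\PetriZZ \approx \ExPetriZZ$, and then obtain the displayed chain by composition. Indeed, $\ExPetriN \approx \PetriN$ is precisely the content of~\cite{Sassone1995} (the equivalence between \textbf{Petri} and \textbf{PSSMC}), while the equality $\PetriN = \PetriZZ$ was already observed after Definition~\ref{def:PetriZ and ZZ Category}, since semi-integer nets and their morphisms are defined word for word as ordinary nets and ordinary net morphisms. Hence, once $\PetriZZ \approx \ExPetriZZ$ is in hand, splicing it together with these two facts yields the whole chain, and in particular $\ExPetriN \approx \ExPetriZZ$.

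To prove $\PetriZZ \approx \ExPetriZZ$ I would cut down the equivalence $\PetriZ \approx \ExPetriZ$ of the preceding corollary along the two subcategory inclusions, using the same $\Fold{-}$ and $\UnFold{-}$. The object level is routine. If $N$ is a semi-integer net then $\Pin{t},\Pout{t}\in\Mset{P_N}$ for each transition $t$, so every \underline{transition} of $\Fold{N}$ has $\nu=\Pin{t}$ and $\nu'=\Pout{t}$ taking only values $\geq 0$; thus $\Fold{N}$ is a positive object and $\Fold{-}$ lands in $\ExPetriZZ$. Conversely, if $\CategoryC$ is positive then every \underline{transition} has $\nu,\nu'\geq 0$, so the maps $\Pin{-},\Pout{-}$ read off $\UnFold{\CategoryC}$ land in $\Mset{P_N}\subseteq\Iset{P_N}$, making $\UnFold{\CategoryC}$ a semi-integer net. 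Since the unit and counit of $\PetriZ\approx\ExPetriZ$ have structural-isomorphism components, they preserve the $\Multiplicity$-value of every object and therefore restrict to natural isomorphisms on the positive subcategories.

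The delicate point, which I expect to be the main obstacle, is the morphism level. A morphism of $\ExPetriZZ$ is a \underline{transition}-preserving functor between positive objects taken modulo $\mathfrak{R}$, and under $\UnFold{-}$ its action on objects yields a \emph{group} homomorphism $g\colon\Iset{P_N}\to\Iset{P_M}$, whereas a morphism of $\PetriZZ$ demands that $g$ be a \emph{monoid} homomorphism $\Mset{P_N}\to\Mset{P_M}$, i.e.\ that it send each generator to a positive string of $\IStrings{P_M}$. The real work is to check that, between positive objects, the functor assignment of the corollary cuts down to a bijection onto exactly the monoid-homomorphism net morphisms. I would establish this by showing that positivity of the \underline{transitions}, together with the compatibility conditions $f;\Pin{-}_M=\Pin{-}_N;g$ and $f;\Pout{-}_M=\Pout{-}_N;g$ forced on any \underline{transition}-preserving functor, pins $g$ down to be positive on every generator visible to the net's dynamics, so that the image of $\UnFold{-}$ is exactly the monoid-homomorphism fragment; faithfulness and fullness onto $\PetriZZ$ then follow from those of the unrestricted equivalence together with the fact, already used above, that $\mathfrak{R}$ never identifies functors with distinct object actions. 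Care is genuinely needed here, since an a priori group homomorphism may be negative on generators untouched by any \underline{transition}, and the crux is to show that the monoidal and the compact closed notions of morphism coincide on this positive fragment — exactly the content that separates our setting from Sassone's, where the object monoid is $\Mset{P_N}$ and positivity of morphisms is automatic. Once this bijection is verified, $\Fold{-}$ and $\UnFold{-}$ assemble into $\PetriZZ\approx\ExPetriZZ$, and composing with~\cite{Sassone1995} and the equality $\PetriN=\PetriZZ$ gives the stated chain.
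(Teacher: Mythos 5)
Your route is the same as the paper's: restrict $\Fold{-}$ and $\UnFold{-}$ from the corollary $\PetriZ \approx \ExPetriZ$ along the two subcategory inclusions, observe at the object level that $\Fold{-}$ sends semi-integer nets to positive objects and $\UnFold{-}$ sends positive objects to semi-integer nets, and splice with~\cite{Sassone1995} and $\PetriN = \PetriZZ$. The difference is that where the paper dispatches the morphism level by asserting that $\PetriZZ$ is ``obviously'' a full subcategory of $\PetriZ$, you defer it as a ``crux'' to be checked --- and that crux is exactly where the argument breaks, because the statement you hope to prove there is false. The compatibility conditions $f;\Pin{-}_M = \Pin{-}_N;g$ and $f;\Pout{-}_M = \Pout{-}_N;g$ constrain $g$ only on places touched by transitions, and nothing constrains it elsewhere. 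Concretely, let $N$ be the semi-integer net with one place $p$ and no transitions. Then $\Fold{N} = \Gr{\{p\}}$ is vacuously a positive object, and by Remark~\ref{rem:freeness of gr} the assignment $p \mapsto p^{-1}$ extends to a strict symmetric monoidal functor $F\colon \Fold{N} \to \Fold{N}$ carrying cups and caps to cups and caps; $F$ preserves structural arrows and (vacuously) \underline{transitions}, so it represents a morphism of $\ExPetriZZ$, which is by definition a \emph{full} subcategory of $\ExPetriZ$. Since structural arrows preserve $\Multiplicity$, there is no structural arrow $p \to p^{-1}$, so $F$ is not $\mathfrak{R}$-equivalent to any functor induced by a monoid homomorphism; and $\UnFold{F}$ acts on places by $1 \mapsto -1$, which is a group homomorphism $\Iset{\{p\}} \to \Iset{\{p\}}$ but not a monoid homomorphism $\Mset{\{p\}} \to \Mset{\{p\}}$. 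Hence $\Hom{\PetriZZ}{N}{N} \cong \naturals$ while $\Hom{\ExPetriZZ}{\Fold{N}}{\Fold{N}} \cong \integers$: the restricted $\Fold{-}$ is faithful and essentially surjective but not full, and the hom-set bijection your plan calls for does not exist. Your own parenthetical worry about ``generators untouched by any \underline{transition}'' is not a technicality to be argued away; it is a counterexample to the step.

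The gap is therefore genuine, but it is a defect of the definitions rather than of your strategy, and the paper's own proof silently suffers from the same problem: its claim that $\PetriZZ$ is a full subcategory of $\PetriZ$ is refuted by the same one-place net, for which $\Hom{\PetriZZ}{N}{N} \cong \naturals$ embeds properly into $\Hom{\PetriZ}{N}{N} \cong \integers$. Closing the gap requires repairing a definition, not adding an argument: either take $\ExPetriZZ$ to be the non-full subcategory of $\ExPetriZ$ whose morphisms admit representatives sending generators to positive strings (equivalently, whose induced homomorphism on objects restricts to $\Mset{-}$), or enlarge the morphisms of $\PetriZZ$ to group homomorphisms as in $\PetriZ$. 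With either repair, your restriction argument --- and the paper's --- goes through essentially verbatim.
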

\noindent
This result says that the category of executions for Petri nets defined in~\cite{Sassone1995}, whose objects are monoidal but not compact closed categories, is equivalent to $\ExPetriZZ$, that is a category of compact closed categories. This should not surprise: $\PetriN$ and $\PetriZZ$ are the same category, but represent different things: The fact is that they are different in the definition of what a state is, which is not accounted for in their categorical structure. Different definitions of state are then embedded in the structure of the objects of $\ExPetriN$ and $\ExPetriZZ$, while the morphism structure of $\ExPetriN$ and $\ExPetriZZ$ models how different nets interact with each other. This result then says that semi-integer nets simulate and interact with each other exactly as normal nets do: Different types of objects, representing different types of executions, are connected to each other in the same way, that only depends on the underlying topology.
\section{The causal structure of net computations}\label{sec:causality}

Now we want to use the properties of compact closed categories to gain a better insight into how the executions of (semi-)integer Petri nets work, that is, we want to look inside the objects of $\ExPetriZ$ and $\ExPetriZZ$. Compact closed categories admit a well-known graphical calculus~\cite{Selinger2010, CoeckeKissinger2016}, that we will extensively use to represent what occurs. This calculus has been extensively used in the study of categorical quantum mechanics and the reader familiar with this topic should consider carefully such conceptual links while reading the following. First of all, a quick recap: In the graphical calculus for compact closed categories we express categorical facts as diagrams, that are read left to right. Objects are drawn as wires and arrows as boxes. The wire standing for the monoidal unit $I$ is not drawn, or it is drawn as dashed when its presence needs to be emphasized (Figure~\ref{fig:wires}). Composition of arrows $f;g$ is just wiring the outputs of $f$ into the inputs of $g$, while monoidal products are depicted putting boxes and wires next to each other (Figure~\ref{fig:product}). Swaps, cups and caps are represented as in Figure~\ref{fig:cupcapswap}. 
\begin{figure}[h!]
	\centering
	\begin{subfigure}[c]{0.2\textwidth}\centering
		\begin{tikzpicture}[oriented WD]
		\node[bb={1}{1}] (B) {$f$};
		\node[transparent, bb port length=8pt, bb={3}{3},bbx=0.8cm, bby=1cm, fit={(B)}] (X) {};
		\draw[label] 
		node [left=2pt of X_in1] {$u$}
		node [right=2pt of X_out1] {$u$}
		node [left=2pt of X_in2] {$u$}
		node [right=2pt of X_out2] {$v$}
		node [left=2pt of X_in3] {$I$}
		node [right=2pt of X_out3] {$I$}
		;
		\draw (X_in1') to (X_out1');
		\draw[dashed] (X_in3') to (X_out3');
		\draw (X_in2') to (B_in1);
		\draw (B_out1) to (X_out2');
		\end{tikzpicture}
		\caption{Object, morphism, monoidal unit.}
		\label{fig:wires}
	\end{subfigure}
	~\hfill 
	\begin{subfigure}[c]{0.2\textwidth}\centering
		\begin{tikzpicture}[oriented WD, bb small, bb port sep = 3]
		\node[bb={1}{1}, bby=0.2cm] (h) {$h$};
		\node[above left = 0cm and 0.2cm of h, bb={1}{1}] (f) {$f$};
		\node[above right = 0cm and 0.2cm of h, bb={1}{1}] (g) {$g$};
		\draw[label] 	
		node [above left=2pt and 4pt of g_in1] {$u''$}
		;
		\node[transparent, bb port length=8pt, bb={2}{2}, bbx=0.6cm, fit={($(f.north) + (0,2)$) ($(h.south) - (0,2)$) (f.west) (g.east)}] (X) {};
		\draw[label] 		
		node [left=2pt of X_in1] {$u$}
		node [right=2pt of X_out1] {$u'$}
		node [left=2pt of X_in2] {$v$}
		node [right=2pt of X_out2] {$v'$}
		;
		\draw (X_in1') to (f_in1);
		\draw (X_in2') to (h_in1);
		\draw (g_out1) to (X_out1');
		\draw (h_out1) to (X_out2');
		\draw (f_out1) to (g_in1');
		\end{tikzpicture}
		\caption{$u \otimes v \xrightarrow{(f;g)\otimes h} u' \otimes v'$}
		\label{fig:product}
	\end{subfigure}
	~\hfill 
	\begin{subfigure}[c]{0.2\textwidth}\centering
		\begin{tikzpicture}[oriented WD, bb small, bby =.6cm, bb port length=8pt]
		
		\node[transparent, bb min width=1cm, bb={4}{4}] (X) {};
		\draw[label] 
		node [left=2pt of X_in1] {$u$}
		node [right=2pt of X_out1] {$v$}
		node [left=2pt of X_in2] {$v$}
		node [right=2pt of X_out2] {$u$}
		node [left=2pt of X_in3] {$u$}
		node [right=2pt of X_out3] {\small{$u^{-1}$}}
		node [left=2pt of X_in4] {\small{$u^{-1}$}}
		node [right=2pt of X_out4] {$u$}
		;

		\draw (X_in1') to (X_out2');
		\draw (X_in2') to (X_out1');
		\draw (X_out3') to [out=180] (X_out4');
		\draw (X_in3') to [in=0] (X_in4');
		\end{tikzpicture}
		\caption{Swap (top), cap (b. left), cup (b. right).}
		\label{fig:cupcapswap}
	\end{subfigure}
	~\hfill 
	\begin{subfigure}[c]{0.2\textwidth}\centering
		\begin{tikzpicture}[oriented WD, bb small, bby =.4cm, bb port length=2pt]		
		\node[transparent, bb min width=0.3cm, bb={6}{6}] (X) {};
		\draw[label] 
		node [right=2pt of X_out5] {\small{$u^{-1}$}}
		node [right=2pt of X_out6] {$u$}
		node [left=2pt of X_in1] {\small{$u^{-1}$}}
		node [left=2pt of X_in2] {$u$}
		;	
		
		\draw (X_in1') to (X_out2');
		\draw (X_in2') to (X_out1');		
		\draw (X_out1) to [in=0] (X_out2);
		
		\draw (X_in3) to [out=180] (X_in4);
		\draw (X_in3') to (X_out4');	
		\draw (X_in4') to (X_out3');		
		\draw (X_out3) to [in=0] (X_out4);
		
		\draw (X_in5') to (X_out6');
		\draw (X_in6') to (X_out5');
		\draw (X_in5) to [out=180] (X_in6);

		\node[right =0.5cm of X] (equals) {\small{$=$}};
		\node[above =0.4cm of equals] (equals1) {\small{$=$}};
		\node[below =0.4cm of equals] (equals2) {\small{$=$}};		
		
		\node[transparent, bb min width=0.2cm,  right = 0.5cm of equals, bb={6}{6}] (Y) {};
		\draw[label] 
		node [right=2pt of Y_out5] {\small{$u^{-1}$}}
		node [right=2pt of Y_out6] {$u$}
		node [left=2pt of Y_in1] {\small{$u^{-1}$}}
		node [left=2pt of Y_in2] {$u$}
		;	
		
		\draw (Y_in1') to (Y_out1');
		\draw (Y_in2') to (Y_out2');	
		\draw (Y_out1) to [in=0] (Y_out2);
		
		\draw[dashed] (Y_in3') to (Y_out3');	
		\draw[dashed] (Y_in4') to (Y_out4');				
		\draw[dashed] (Y_in3) to [out=270, in=90] (Y_in4);
		\draw[dashed] (Y_out3) to [out=270, in=90] (Y_out4);		
		
		\draw (Y_in5') to (Y_out5');
		\draw (Y_in6') to (Y_out6');
		\draw (Y_in5) to [out=180] (Y_in6);
		\end{tikzpicture}
		\caption{Axioms~\ref{eq:axiom} and a consequence (bottom).}
		\label{fig:Petri axioms}
	\end{subfigure}
	\caption{Graphical calculus for compact closed categories.}
\end{figure}

\noindent
In a integer Petri category, moreover, the axioms in~\ref{eq:axiom} hold, and can be represented graphically as in Figure~\ref{fig:Petri axioms} (note that the third line in Figure~\ref{fig:Petri axioms} is a consequence of the first, and has been added explicitly to give a sense of symmetry). The axioms, now that they are shown graphically, have a clear interpretation: The first and the third line in Figure~\ref{fig:Petri axioms} are interpreted as ``it does not matter how you deform and twist them, in the end they are still a cap and a cup''. The axiom in the second line represents the fact that we are not interested in registering events when ``nothing happens'': The symbol on the left represents the creation of a pair of type $u \otimes u^{-1}$ followed by its annihilation, while the one on the right is the monoidal unit. The axiom just says that if no element of the created couple undergoes any sort of process/transformation before the annihilation, then we may as well forget that the event happened.
\begin{figure}[h]
	\centering
	\begin{tikzpicture}[oriented WD, bbx = 1cm, bby =.3cm, bb min width=1cm, bb port sep=1]
	\node[bb={6}{6}] (B) {$\tau$};
	
	\node[transparent, bb port length=8pt, bb={6}{6}, fit={($(B.north)+(0,-1)$) (B.east) (B.west) ($(B.south)-(0,-1)$)}] (X) {};
	\draw[label] 
	node [left=8pt of X_in1] {$u_1$}
	node [right=8pt of X_out1] {$v_1$}
	node [left=8pt of X_in2] {$u_2$}
	node [right=8pt of X_out2] {$v_2$}
	node [left=8pt of X_in3] {$u_3$}
	node [right=8pt of X_out3] {$v_3$}
	node [left=8pt of X_in4] {$u_4$}
	node [right=8pt of X_out4] {$v_4$}
	node [left=8pt of X_in5] {$u_4^{-1}$}
	node [right=8pt of X_out5] {$v_4^{-1}$}
	node [left=8pt of X_in6] {$u_5$}
	node [right=8pt of X_out6] {$v_5$}
	;
	
	\draw (X_in1') to (B_in1);
	\draw (X_in2') to (B_in3);
	\draw (X_in3') to (B_in2);
	\draw (X_in4') to (B_in4);
	\draw (X_in5') to (B_in5);
	\draw (X_in6') to (B_in6);
	\draw (X_in4) to [out=180] (X_in5);
	\draw (X_out4) to [in=0] (X_out5);
	
	\draw (B_out1) to (X_out1');
	\draw (B_out2) to (X_out3');
	\draw (B_out3) to (X_out2');
	\draw (B_out4) to (X_out4');
	\draw (B_out5) to (X_out5');
	\draw (B_out6) to (X_out6');
	\draw (X_in4) to [out=180] (X_in5);
	\draw (X_out4) to [in=0] (X_out5);
	
	\node[right =2.5cm of B] (equals) {\huge{$=$}};
	
	\node[bb={4}{4}, right =2.5cm of equals] (B1) {$\tau$};
	
	\node[transparent, bb port length=8pt, bb={4}{4}, fit={($(B1.north)+(0,-1)$) (B1.east) (B1.west) ($(B1.south)-(0,-1)$)}] (X1) {};
	\draw[label] 
	node [left=8pt of X1_in1] {$u_1$}
	node [right=8pt of X1_out1] {$v_1$}
	node [left=8pt of X1_in2] {$u_2$}
	node [right=8pt of X1_out2] {$v_2$}
	node [left=8pt of X1_in3] {$u_3$}
	node [right=8pt of X1_out3] {$v_3$}
	node [left=8pt of X1_in4] {$u_5$}
	node [right=8pt of X1_out4] {$v_5$}
	;
	
	\draw (X1_in1') to (B1_in1);
	\draw (X1_in2') to (B1_in2);
	\draw (X1_in3') to (B1_in3);
	\draw (X1_in4') to (B1_in4);
	
	\draw (B1_out1) to (X1_out1');
	\draw (B1_out2) to (X1_out2');
	\draw (B1_out3) to (X1_out3');
	\draw (B1_out4) to (X1_out4');
	\end{tikzpicture}
	\caption{Transitions can be depicted as boxes with the property above, for suitable $u_i$ and $v_i$.}
	\label{fig:transitions}
\end{figure}

\noindent
The first thing that we want to do is to see what a \underline{transition} looks like graphically. Given a category in $\ExPetriZ$, we know that the only generating morphisms are \underline{transition} components. The morphisms $\tau_{u,v}$ are all ``avatars'' of the same \underline{transition} $\tau$, to be used in different contexts depending on the bookkeeping we have to do. 
Since these components all stand for the same transition in the net $\UnFold{N}$, the only data that matters is the \underline{transition} they are part of, and in diagrams we can safely omit the component indexes, as in Figure~\ref{fig:transitions}:
Here the two boxes are clearly representing different components of the \underline{transition} $\tau$, and the equality, holding for any suitable choice of domain/co-domain (viz. objects that differ only by structural arrows, and get sent to the same multiset) is the exact graphical embodiment of $\tau$ being a natural transformation commuting with structural arrows.
\begin{figure}[h]
	\centering
	\begin{subfigure}[b]{0.48\textwidth}\centering
		\resizebox{\textwidth}{!}{
			\begin{tikzpicture}
			\pgfmathsetmacro\bS{4.5}
			\pgfmathsetmacro\hkX{0.71}
			\pgfmathsetmacro\kY{1.5}
			\pgfmathsetmacro\hkY{\kY*0.5}
			
			\draw pic (m0) at (0,1) {netB={{}/{}/{}}};
			\draw pic (m0) at (\bS,1) {netB={{black, red}/{}/{}}};
			\draw pic (m1) at ({2 * \bS},1) {netB={{red}/{black}/{$\blacktriangle$}}};
			
			\begin{scope}[very thin, shift={(-1.35, 0)}]
			\foreach \j in {1,...,2} {
				\pgfmathsetmacro \k { \j * \bS + \hkX};
				\draw[gray,dashed] (\k,0) -- (\k,-2.3);
				\draw[gray] (\k,2) -- (\k,0);
			}
			\end{scope}
			
			\begin{scope}[
			shift={(-0.65,-1)}, xscale=\bS, yscale={-1}, oriented WD,
			bbx = 1cm, bby =.4cm, bb min width=1cm, bb port sep=1]
			1
			
			\draw node [fill=white,bb={1}{1}] (Tau) at (2,0) {$\tau$};
			
			\draw (Tau_in1) -- node[above] {$X$} (1,0)
			to[out=-180,in=-90] (0.75,0.5) to[out=90,in=180]
			(1,1) -- node[xshift=-4mm, above] {$X^{-1}$} (2,1)
			-- node[xshift=7.5mm, above] {$X^{-1}$} (3,1);
			
			\draw (Tau_out1) -- node[above] {$Y$} (3,0);
			
			\end{scope}
			\end{tikzpicture}
		}
		\caption{Turning a legal state into an illegal one.}
		\label{fig:turning illegal}
	\end{subfigure}
	\hfill
	\begin{subfigure}[b]{0.48\textwidth}\centering
		\resizebox{\textwidth}{!}{
			\begin{tikzpicture}
			\pgfmathsetmacro\bS{4.5}
			\pgfmathsetmacro\hkX{0.71}
			\pgfmathsetmacro\kY{1.5}
			\pgfmathsetmacro\hkY{\kY*0.5}
			
			\draw pic (m0) at (0,1) {netB={{black}/{red}/{}}};
			\draw pic (m0) at (\bS,1) {netB={{}/{black, red}/{$\blacktriangle$}}};
			\draw pic (m1) at ({2 * \bS},1) {netB={{}/{}/{}}};
			
			\begin{scope}[very thin, shift={(-1.35, 0)}]
			\foreach \j in {1,...,2} {
				\pgfmathsetmacro \k { \j * \bS + \hkX};
				\draw[gray,dashed] (\k,0) -- (\k,-2.3);
				\draw[gray] (\k,2) -- (\k,0);
			}
			\end{scope}
			
			\begin{scope}[
			shift={(-0.65,-1)}, xscale=\bS, yscale={-1}, oriented WD,
			bbx = 1cm, bby =.4cm, bb min width=1cm, bb port sep=1]
			
			
			\draw node [fill=white,bb={1}{1}] (Tau) at (1,0) {$\tau$};
			
			\draw (Tau_in1) -- node[above] {$X$} (0,0);
			
			\draw (Tau_out1) -- node[above] {$Y$} (2,0)
			to[out=0,in=-90] (2.25,0.5) to[out=90,in=0]
			(2,1) -- node[xshift=7.5mm, above] {$Y^{-1}$}
			(1,1) -- node[xshift=-4mm, above] {$Y^{-1}$} (0,1);
			
			\end{scope}
			\end{tikzpicture}
		}
		\caption{Turning an illegal state into a legal one.}
		\label{fig:turning legal}
	\end{subfigure}
\end{figure}

\noindent
Using cups and caps, we can see how we are now able to represent executions of Petri nets that weren't representable before. Look, for instance, at  Figure~\ref{fig:turning illegal}: Above, we are representing what happens from the point of view of Petri nets, while below we depict how the execution is built as the transition fires and couples of positive/negative tokens are produced/deleted. Vertical dashed lines separate consecutive instants in time. In the context of semi-integer nets, the net in Figure~[\ref{fig:turning illegal}] depicts an execution that turns a legal state into an illegal one. This represents well the situation detailed in Section~\ref{sec: first definitions}, where we deduced that a net could always fire ``borrowing'' some tokens from a place. In the context of integer nets, the same figure may represent the idea of moving money from a given account (the first place) to another (the second place) without having it. Negative tokens then represent the necessary debt one has to make in order to take that money out of the account.

On the contrary, again in the context of semi-integer nets, the net in Figure~\ref{fig:turning legal} turns an illegal state into a legal one, literally ``deleting'' a negative token. From the point of view of integer Petri nets, this can be interpreted as the act of extinguishing a debt.
\begin{figure}[h]\centering
	\resizebox{0.7\textwidth}{!}{
		\begin{tikzpicture}
		\pgfmathsetmacro\bS{4.5}
		\pgfmathsetmacro\hkX{0.71}
		\pgfmathsetmacro\kY{1.5}
		\pgfmathsetmacro\hkY{\kY*0.5}
		
		\draw pic (m0) at (0,1) {netB={{}/{red}/{}}};
		\draw pic (m0) at (\bS,1) {netB={{black, red}/{red}/{}}};
		\draw pic (m1) at ({2 * \bS},1) {netB={{red}/{red, black}/{$\blacktriangle$}}};
		\draw pic (m0) at ({3 * \bS},1) {netB={{red}/{}/{}}};
		
		\begin{scope}[very thin, shift={(-1.35, 0)}]
		\foreach \j in {1,...,3} {
			\pgfmathsetmacro \k { \j * \bS + \hkX};
			\draw[gray,dashed] (\k,0) -- (\k,-4);
			\draw[gray] (\k,2) -- (\k,0);
		}
		\end{scope}
		
		\begin{scope}[
		shift={(-0.65,-1)}, xscale=\bS, yscale={-1}, oriented WD,
		bbx = 1cm, bby =.4cm, bb min width=1cm, bb port sep=1]
		
		\draw (0,0) -- node[above] {$Y^{-1}$} (1,0)
		-- node[above] {$Y^{-1}$} (2,0)
		-- node[above] {$Y^{-1}$} (3,0)
		to[out=0,in=-90] (3.25, 0.5)
		to[out=90,in=0] (3.0, 1);
		
		\draw node [fill=white,bb={1}{1}] (Tau) at (2,1) {$\tau$};
		
		\draw (Tau_out1) -- node[xshift=-7.5mm, above] {$Y$} (3,1);
		
		\draw (Tau_in1) -- node[xshift=4mm, above] {$X$} (1,1)
		to[out=-180,in=-90] (0.75,1.5) to[out=90,in=180]
		(1,2) -- node[above] {$X^{-1}$}
		(2,2) -- node[above] {$X^{-1}$}
		(3,2) -- node[above] {$X^{-1}$} (4,2);
		
		\end{scope}
		\end{tikzpicture}
	}
	\caption{The transpose of a transition acts in the opposite direction on tokens of opposite sign.}
	\label{fig:transpose}
\end{figure}

\noindent
Combining the concepts considered above, we can take the \emph{transpose} of any transition, as shown in Figure~\ref{fig:transpose}. This is demonstrates a duality that is ubiquitous when working with compact closed categories: Each transition can naturally be seen as a ``forward-acting'' process or as a ``backward-acting'' one on the dual entities. If a transition in a semi-integer net is thought of as ``taking positive tokens from some place $X$ to some place $Y$'', Figure~\ref{fig:transpose} shows how the same transition can be thought of as ``taking negative tokens from $Y$ to $X$''. This means that if we are in an illegal state represented by a negative token in some place, we can shift that token backwards by firing transitions pointing at that place. This makes indeed sense: Firing a transition $t$ from $X$ to $Y$, with a negative token in $Y$, amounts to saying ``If I have any way to produce a positive token in $X$, then I can automatically fix my problem in $Y$ by firing $t$. So I might as well say that $t$ shifts my problem in $Y$ to a problem in $X$''.

\noindent
In integer Petri nets, the transpose can be seen as switching from thinking in terms of credit to thinking in terms of debit: Paying someone could either mean ``giving money'' or ``acquiring debt''.
\begin{wrapfigure}{r}{0.5\textwidth}\centering
	\vspace{4pt}
	\begin{tikzpicture}[node distance=1.3cm,>=stealth',bend angle=45,auto]		
	\node[place] (1a) {};
	\node[transition] (2a) [right of=1a] {}
	edge [pre] node[swap] {1} (1a);
	\node[place] (3a) [right of=2a] {}
	edge [pre] node[swap] {-1} (2a);
	\end{tikzpicture}
	\caption{The Inversion transition}
	\vspace{-2pt}
	\label{fig:hadamard net}
\end{wrapfigure}
In integer nets we can also have mixed transitions, like the \emph{inversion transition} depicted in Figure~\ref{fig:hadamard net}. We think of it as a process consuming money and producing debt. The transpose, in this case, is again a process that consumes money and produces debt, but flowing in the other direction.

Now we can use this machinery to tackle the conflict resolution problem exposed in Section~\ref{sec: first definitions}. The chain of events is represented in Figure~\ref{fig:resolution}:
First, a user $U_1$ fires $\tau$, consuming the only token present in $X$. Then user $U_2$, unaware of $U_1$'s action, fires $\nu$, putting the net into an illegal state. Finally, $U_1$ puts the net back to a legal state, ``giving back'' to $X$ the resource used before, by firing $\mu$. We clearly see how cups and caps \emph{already give us the solution for our conflict problem}: If we apply the yanking equations and straighten the line, \emph{we get a sequence of legal states}, namely the sequence of firings ``$\tau$, then $\mu$, then $\nu$''. We witness how the graphical formalism for executions makes the solution to our problem easy to understand: If the vertical bars separate instants in time as it flows in the real word, the wire in the string diagram represents the flow of time \emph{according to the net itself}: Eliminating negative tokens previously produced amounts to reshuffling the order of events in the net.
\begin{figure}[h]
	\resizebox{\textwidth}{!}{
		\begin{tikzpicture}
		\pgfmathsetmacro\bS{3.5}
		\pgfmathsetmacro\hkX{(\bS/3.5)}
		\pgfmathsetmacro\kY{-1.5}
		\pgfmathsetmacro\hkY{\kY*0.5}
		
		\draw pic (m0) at (0,0) {netA={{black}/{}/{}/{}/{}/{}}};
		\draw pic (m1) at (\bS,0) {netA={{}/{black}/{}/{$\blacktriangleright$}/{}/{}}};
		\draw pic (m2) at ({2 * \bS},0) {netA={{black,red}/{black}/{}/{}/{}/{}}};
		\draw pic (m3) at ({3 * \bS},0) {netA={{red}/{black}/{black}/{}/{}/{$\blacktriangleright$}}};
		\draw pic (m4) at ({4 * \bS},0) {netA={{black, red}/{}/{black}/{}/{$\blacktriangleleft$}/{}}};
		\draw pic (m5) at ({5 * \bS},0) {netA={{}/{}/{black}/{}/{}/{}}};
		
		\begin{scope}[very thin]
		\foreach \j in {1,...,5} {
			\pgfmathsetmacro \k { \j * \bS - 1 };
			\draw[gray,dashed] (\k,-3) -- (\k,-8);
			\draw[gray] (\k,3) -- (\k,-3);
		}
		\end{scope}
		
		\begin{scope}[shift={(0,-4)}, oriented WD, bbx = 1cm, bby =.4cm, bb min width=1cm, bb port sep=1]
		
		\draw node [fill=white,bb={1}{1}] (Tau) at (\bS - 1,0) {$\tau$};
		\draw node [fill=white,bb={1}{1}] (Mu)  at ({4 * \bS - 1},0) {$\mu$};
		\draw node [fill=white,bb={1}{1}] (Nu)  at ({3 * \bS - 1},{2 * \kY}) {$\nu$};
		
		\draw (-2,0) --     node[above] {$X$}       (0,0)
		--                  node[above] {}          (Tau_in1);
		
		\draw (Tau_out1) -- node[above] {$Y$}      ({2 * \bS - 1},0)
		--                  node[above] {$Y$}      ({3 * \bS - 1},0)
		--                  node[above] {$Y$}      (Mu_in1);
		
		\draw (Mu_out1) --  node[above] {$X$}      ({5 * \bS - 1},0)
		--                  node[above] {}         ({5 * \bS - 1},0)
		to[out=0, in=90]    node[above] {}         ({5 * \bS},-0.75)
		to[out=-90,in=0]    node[below] {}         ({5 * \bS - 1},{\kY})
		--                  node[above] {}         ({5 * \bS - 1},{\kY})
		--                  node[above] {$X^{-1}$} ({4 * \bS - 1},{\kY})
		--                  node[above] {$X^{-1}$} ({3 * \bS - 1},{\kY})
		--                  node[above] {$X^{-1}$} ({2 * \bS - 1},{\kY})
		to[out=-180,in=90]  node[above] {}         ({2 * \bS - 2},{2 * \kY - \hkY})
		to[out=-90,in=-180] node[above] {}         ({2 * \bS - 1},{2 * \kY})
		--                  node[above] {$X$}      (Nu_in1);
		
		\draw (Nu_out1) --  node[above] {$Z$}      ({4 * \bS - 1},{2 * \kY})
		--                  node[above] {$Z$}      ({5 * \bS - 1},{2 * \kY})
		--                  node[above] {}         ({6 * \bS - 1},{2 * \kY})
		--                  node[above] {$Z$}      ({6 * \bS},{2 * \kY});
		
		\end{scope}
		\end{tikzpicture}
	}
	\caption{In $\ExPetriZZ$, Conflict resolution. In $\ExPetriZ$, business strategy involving short-selling.}
	\label{fig:resolution}
\end{figure}

\noindent
One downside of this approach is that $U_1$ is acting on a token that ``was already there'', while $U_2$ is acting on a token produced on the fly by a cup. Clearly these two tokens are different, since the first may have a complex history (e.g. it was consumed/produced by other transitions before) while the second one is created ``on the fly''. The whole point is obviously that $U_2$ \emph{does not know} this: He is unaware of $U_1$'s action, and believes to be using $U_1$'s token! 

Clearly, to solve this issue we need a way to decide who acted first between $U_1$ and $U_2$. This is a typical \emph{consensus problem}, meaning that we need a way to establish an objective case (namely who acted first between $U_1$ and $U_2$) among a group of agents that may have different points of view. The advantage of our approach is that \emph{the amount of consensus required is small}: Instead of having to converge on what is the best way to solve the conflict (i.e. by merging transitions in some strange way, or dropping the execution of one of the two transitions altogether etc.), the agents have to agree on a very simple fact that for instance could be solved, implementation-wise, just using timestamps. The execution structure of the net will take care of the rest.

As usual, Figure~\ref{fig:resolution} has an interpretation also in the context of integer nets: In this case we again embrace the economics-oriented perspective. Places $X, Y, Z$ represent accounts (more specifically portfolios), belonging to different agents, that will be denoted with the same names $X, Y, Z$ to avoid clutter. Imagine that $X$ holds some financial instrument, and $X$ predicts that prices will fall sharply in the future: Figure~\ref{fig:resolution} may represent a possible business strategy. First, $X$ sells the instrument to $Y$. To capitalize further on his prediction, he also \emph{short sells}
\footnote{In finance, \emph{going short} means buying a financial instrument with the expectation that it will decrease in value~\cite{InvestopediaShort}. \emph{Short selling} is the act of selling an asset that the seller does not own, and includes borrowing the instrument from a broker.}
the financial instrument to $Z$, for instance via going long
\footnote{\emph{Going long} is the opposite of going short, and means buying a financial instrument with the expectation that it will acquire value~\cite{InvestopediaLong}.} 
on a put option
\footnote{A \emph{put option} is a contract giving the owner the right, but not the obligation, to sell a specified amount of the instrument at a specified price and time~\cite{InvestopediaPut}. Note that in this case $X$ is going long on the put option, because the more prices go down with regard to the selling value specified in the contract, the more the contract will acquire value.}. When prices go down, $X$ buys back the financial instrument from $Y$ and exercises the put option (effectively selling to $Z$), making a double profit. 

In this example cups and caps are helpful to represent the idea that a financial operation can be executed at a different time from its purchase. 
Note also how we can infer, from this diagram, that $Y$'s strategy consists in going long on the financial instrument owned by $X$: $Y$ hopes to sell the instrument back to $X$ at a profit, which is possible only if it acquires value in the short future.

\section{Conclusion and Future work}\label{sec:future work}
In this work we generalized the notion of Petri nets in two different ways: We built their categories of executions and demonstrated the power of our formalism graphically. We were driven by practical applications, namely a way to represent conflict resolution in Petri nets and a way to represent economic phenomena and accounting. Both areas of research are deeply entangled in the Statebox project. Statebox~\cite{Statebox} is a programming language for complex infrastructure entirely based on Petri nets, and runs in a decentralized way using Blockchain-based solutions. The Blockchain~\cite{Nakamoto2008} is needed exactly to establish consensus when the net is run by multiple users. 

\noindent
Blockchain-based consensus deals with conflicts by accepting only one state and discarding conflicting others. Semi-integer nets provide an alternative way to preserve consensus by merging conflicting states in a net. This is achieved reshuffling the causality flow of the executed transitions. The practical use and hopefully implementation of the concepts presented here will surely be object of future work. We will also work to expand the categorical machinery to deal with more sophisticated conflicting scenarios.

On the other hand, integer nets are useful to model economic flows. This is very interesting from a Blockchain perspective, where the usual way of representing resources (computations included) is by monetizing them~\cite{Buterin2014}. An operative account of economics is then very useful to design the best way to represent a given asset on the Blockchain, and to create high-level tools to solve long-standing open problems, such as smart contract analysis~\cite{Atzei2017}. Future work will include investigating applications of integer Petri nets to real economic phenomena, and how they relate to other established tools in the field, such as Open Games~\cite{Ghani2016}.

Finally, from a genuinely academic point of view, the structural similarities with the categorical framework for quantum mechanics -- namely compact closed categories -- are worth studying in depth, since the possibility of describing quantum phenomena by means of Petri nets cannot be excluded a priori. If our intellectual resources allow it, this will surely be another future direction of research.

\section*{Acknowledgements}
The authors would like to thank John Baez and Pawel Sobocinski, for having convinced them that the topic was worthy of being turned into a paper. They also thank David Spivak for having shared with them his invaluable knowledge and his TikZ macros, boosting their productivity by several orders of magnitude. Finally, they thank Emilia Gheorghe for having edited and proofread this document.

%
%

\bibliographystyle{eptcs}
\bibliography{bibtexbibliography}
\newpage
\appendix
\section{Proofs}
\structuralsubcategory*
\begin{proof}
	Suppose that $u$ is an object of $\CategoryC$ such that $\Multiplicity(u) = \nu$. Moreover, recall that by definition $u$ has the form $\bigtensor_{i=1}^n u_i$ with each $u_i \in S$. The claim is obvious from the following considerations:
	\begin{itemize}
		\item Identities do not change anything, so clearly $\id{u}{u} = u$; 
		\item $\Multiplicity$ is insensitive to ordering, all it does is counting. Hence applying a symmetry to $u$ doesn't change its image through $\Multiplicity$, meaning: $\Multiplicity(\sigma(u)) = \Multiplicity(u)$;
		\item Given an element $u_i$ of $S$, couples of the form $(u_i^{-1}, u_i)$ do not change $\Multiplicity(u)$, since $\Multiplicity$ is evaluated as $1$ on $u_i$ and as $-1$ on $u_i^{-1}$. This means that such couples can be added and subtracted at will anywhere in/from $u$;
		\item All the cups/caps in $\CategoryC$ can be obtained composing symmetries and cups/caps on the elements of $S$, and their inverses. This means that cups/caps always add/remove elements from $u$ in pairs as in the previous point, leaving $\Multiplicity(u)$ unaltered;
		\item A structural arrow is a composition of monoidal products of identities, symmetries and cups/caps. Since all these things do not alter $\Multiplicity(u)$, applying a structural arrow to $u \in \Gr{\CategoryC, \nu}$ gives us an object $v \in \Gr{\CategoryC, \nu}$. 
	\end{itemize}
	Being $u$ a generic element of $\Gr{\CategoryC, \nu}$ this proves that composition is well defined in $\Gr{\CategoryC, \nu}$. Associativity of morphisms is inherited from $\CategoryC$ and the existence of identities from the fact that identities are trivially structural arrows.
\end{proof}
\frakrisacongruence*
\begin{proof}	
	We start proving that $\mathfrak{R}_{\CategoryC, \CategoryD}$ is an equivalence relation for each couple of integer Petri categories $\CategoryC, \CategoryD$. Clearly for any functor $F:\CategoryC \to \CategoryD$ it is $(F,F) \in \mathfrak{R}_{\CategoryC, \CategoryD}$ since there is always an identity natural transformation $\id{F}: F \to F$. Moreover, if $(F,G) \in \mathfrak{R}_{\CategoryC, \CategoryD}$, then $(G,F) \in \mathfrak{R}_{\CategoryC, \CategoryD}$ since the definition of $\mathfrak{R}_{\CategoryC, \CategoryD}$ is totally symmetric (just invert the roles of $\tau$ and $\tau'$ in the definition). Now suppose that $(F,G)$ and $(G,H)$ are in $\mathfrak{R}_{\CategoryC, \CategoryD}$. Then there are natural transformations
	\begin{equation*}
		\tau: F \to G \qquad \tau': G \to F \qquad 	\bar{\tau}: G \to H \qquad 	\bar{\tau}':H \to G
	\end{equation*}
	With components all structural arrows. But then the componentwise compositions $\tau;\bar{\tau}: F \to H$ and $\bar{\tau}'\tau': H \to F$ have all components structural arrows (since composition of structural arrows is a structural arrow), proving that $(F,H) \in \mathfrak{R}_{\CategoryC, \CategoryD}$.
	
	Now let's prove that $\mathfrak{R}$ defines a congruence. This means that it respects compositions. Consider $(F,G) \in \mathfrak{R}_{\CategoryC, \CategoryD}$ and $(F',G') \in \mathfrak{R}_{\CategoryD, \CategoryE}$. As usual we have:
	\begin{equation*}
		\tau: F \to G \qquad \tau': G \to F \qquad 	\bar{\tau}: F' \to G' \qquad \bar{\tau}':G' \to F
	\end{equation*}
	So, the naturality conditions guarantee that, for each couple of objects $u,v \in \mathcal{C}$ and morphism $f:u \to v$, 
	\begin{equation*} 
	\begin{tikzpicture}[node distance = 2.5cm]
		\node (1) {$(F;F')u$};
		\node[right of = 1] (2) {$(F;F')v$};
		\node[below of = 1] (3) {$(G;F')u$};
		\node[below of = 2] (4) {$(G;F')v$};
		\node[below of = 3] (5) {$(G;G')u$};
		\node[below of = 4] (6) {$(G;G')v$};		
		
		\draw[->] (1) to node[midway, above] {$(F;F')f$}  (2);
		\draw[->] (1) to node[midway, left] {$F'\tau_{u}$}  (3);
		\draw[->] (2) -- (4) node[midway, right] {$F'\tau_{v}$};
		\draw[->] (3) -- (4) node[midway, above] {$(G;F')f$};
		\draw[->] (3) to node[midway, left] {$\bar{\tau}_{Gu}$}  (5);
		\draw[->] (4) -- (6) node[midway, right] {$\bar{\tau}_{Gv}$};
		\draw[->] (5) -- (6) node[midway, above] {$(G;G')f$};
	\end{tikzpicture}
	\qquad
	\begin{tikzpicture}[node distance = 2.5cm]
		\node (1) {$(G;G')u$};
		\node[right of = 1] (2) {$(G;G')v$};
		\node[below of = 1] (3) {$(F;G')u$};
		\node[below of = 2] (4) {$(F;G')v$};
		\node[below of = 3] (5) {$(F;F')u$};
		\node[below of = 4] (6) {$(F;F')v$};		
		
		\draw[->] (1) to node[midway, above] {$(G;G')f$}  (2);
		\draw[->] (1) to node[midway, left] {$G'\tau'_{u}$}  (3);
		\draw[->] (2) -- (4) node[midway, right] {$G'\tau'_{v}$};
		\draw[->] (3) -- (4) node[midway, above] {$(F;G')f$};
		\draw[->] (3) to node[midway, left] {$\bar{\tau}'_{Fu}$}  (5);
		\draw[->] (4) -- (6) node[midway, right] {$\bar{\tau}'_{Fv}$};
		\draw[->] (5) -- (6) node[midway, above] {$(F;F')f$};
	\end{tikzpicture}		
	\end{equation*}		
	Commute. Since both $F'$ and $G'$ are \underline{transition}-preserving they carry structural arrows to structural arrows, and hence the diagram on the left (resp. on the right) defines a natural transformation $F;F' \to G;G'$ (resp. $G;G' \to F;F'$) whose components are all structural arrows, proving $(F;F', G;G') \in \mathfrak{R}_{\CategoryC, \CategoryE}$.	

	\bigskip
	\noindent
	Now we prove the second claim. Consider a \underline{transition} component $\theta_{x,y}$, and structural arrows $\gamma: x'\to x$, $\gamma': y \to y'$, with $x, x' \in \Gr{\CategoryC, \nu}$ and $y,y' \in \Gr{\CategoryC, \nu'}$. Then, being $\theta$ a natural transformation, the following diagrams commute:
	
	\begin{equation*} 
		\begin{tikzpicture}[node distance = 2.5cm]
		\node (1) {$x'$};
		\node[right of = 1] (2) {$y$};
		\node[below of = 1] (3) {$x$};
		\node[below of = 2] (4) {$y$};
		
		\draw[->] (1) to node[midway, above] {$\theta_{x',y}$}  (2);
		\draw[->] (1) to node[midway, left] {$\gamma$}  (3);
		\draw[-] (2) -- (4) node[midway, left] {};
		\draw[->] (3) -- (4) node[midway, above] {$\theta_{x,y}$};
		\end{tikzpicture}
		\qquad
		\begin{tikzpicture}[node distance = 2.5cm]
		\node (1) {$x'$};
		\node[right of = 1] (2) {$y$};
		\node[below of = 1] (3) {$x'$};
		\node[below of = 2] (4) {$y'$};
		
		\draw[->] (1) to node[midway, above] {$\theta_{x',y}$}  (2);
		\draw[-] (1) to node[midway, left] {}  (3);
		\draw[->] (2) -- (4) node[midway, right] {$\gamma'$};
		\draw[->] (3) -- (4) node[midway, above] {$\theta_{x',y'}$};
		\end{tikzpicture}
	\end{equation*}	
	Proving $\gamma;\theta_{x,y};\gamma' = \theta_{x',y};\gamma' = \theta_{x',y'}$. From this it is easy to see that if $\theta_{Fu,Fv} = \theta'_{Fu,Fv}$, then $\theta = \theta'$. This is because given any two elements in $\Gr{\CategoryC, \nu}$ (in $\Gr{\CategoryC, \nu'}$, respectively) there is always a structural morphism in $\Gr{\CategoryC, \nu}$ (in $\Gr{\CategoryC, \nu'}$, respectively) connecting them, since each element can be obtained from another permuting it and adding/erasing pairs of elements $u$ and $u^{-1}$ using cups and caps.
	
	\bigskip
	\noindent
	The last claim is obvious.
%
\end{proof}
\maintheorem*
\begin{proof}
	First of all, note that the category $\Fold{N}$ is isomorphic to the category $\CategoryC$ having $\IStrings{P_N}$ as objects, and whose arrows are generated by the rules:
	\begin{gather*}
		\frac{u \in \IStrings{P_N}}{\id{u} \in  \Hom{\CategoryC}{u}{u}}
		\quad \frac{u \in \IStrings{P_N}}{\epsilon_u \in  \Hom{\CategoryC}{u \tensor u^{-1}}{I}} \quad \frac{u \in \IStrings{P_N}}{\eta_u \in  \Hom{\CategoryC}{I}{u \tensor u^{-1}}} \quad \frac{u,v \in \IStrings{P_N}}{\sigma_{u,v} \in  \Hom{\CategoryC}{u \tensor v}{v \tensor u}}\\\\
		\frac{t \in T_N}{t_{u,v}\in  \Hom{\CategoryC}{u}{v}} \quad \forall u,v.(\Multiplicity(u)= \Pin{t} \wedge \Multiplicity(v)= \Pout{t})\\\\
		\frac{\alpha \in  \Hom{\CategoryC}{u}{v}, \,\, \beta \in  \Hom{\CategoryC}{u'}{v'}}{\alpha \tensor \beta \in  \Hom{\CategoryC}{u \tensor u'}{v \tensor v'}} 
		\qquad 		
		\frac{\alpha \in  \Hom{\CategoryC}{u}{v}, \,\, \beta \in  \Hom{\CategoryC}{v}{w}}{\alpha ; \beta \in  \Hom{\CategoryC}{u}{w}}
	\end{gather*}
	Modulo the axioms that make it into a strict compact closed category:
	\begin{align*}
		\alpha ; \id{v} = &\alpha = \id{u} ; \alpha & \quad  (\alpha;\beta);\gamma &= \alpha;(\beta;\gamma)\\
		\tensorUnit\tensor \alpha = &\alpha = \alpha \tensor \tensorUnit& \quad  (\alpha \tensor \beta) \tensor \gamma &= \alpha \tensor (\beta \tensor \gamma)\\
		\id{u} \tensor \id{v} &= \id{u \tensor v} & \quad  (\alpha \tensor \alpha') ; (\beta \tensor \beta') &= (\alpha ; \beta) \tensor (\alpha' ; \beta')\\
		\sigma_{u, v \tensor w} = (\sigma_{u,v} &\tensor \id{w}); (\id{v} \tensor \sigma_{u,w}) & \quad  \sigma_{u,v};\sigma_{v,u} &= \id{u \tensor v}\\
		\sigma_{u,u'};(\beta \tensor \alpha) &= (\alpha \tensor \beta);\sigma_{v,v'} &\forall \alpha,\beta.(\alpha \in  \Hom{\CategoryC}{u}{v} &\wedge \beta \in  \Hom{\CategoryC}{u'}{v'}\\
		(\id{v} \tensor \eta_{v});(&\epsilon_{v} \tensor \id{v}) = \id{v} & \quad (\eta_{v} \tensor \id{v^{-1}});(&\id{v^{-1}} \tensor \epsilon_{v}) = \id{v^{-1}}\\
		\epsilon_{{u}^{-1}} &= \sigma_{u^{-1}, u} ; \epsilon_u &\quad \eta_{u^{-1}} &= \eta_u ; \sigma_{u^{-1}, u} 
	\end{align*}
	And the axioms:
	\begin{align*}
		\epsilon_{{u}^{-1}} &= \sigma_{u^{-1}, u} ; \epsilon_u  &\qquad \eta_u;\sigma_{u^{-1},u}&;\epsilon_{u^{-1}} = \id{\tensorUnit}\\ 
		p;t_{u',v'} &= t_{u,v};q &\qquad  \forall p, q. (p \in \Hom{\Gr{P_N}}{u}{u'}& \wedge q \in \Hom{\Gr{P_N}}{v}{v'})
	\end{align*}
	This is not difficult to prove using the freeness of $\Gr{P_N}$.
	
	We now prove that $\Fold{-}$ is a functor. Let $N \eqdef \Net{N}$ and $M \eqdef \Net{M}$ be nets, and $\langle f, g\rangle$ a morphism $N \to M$. We want to use the information given by $f$ and $g$ to build a functor between the categories $\Fold{N}$ and $\Fold{M}$.
	We sketch this procedure as follows:
	\begin{itemize}
		\item We note that $g$ is an homomorphism $\Iset{P_N} \to \Iset{P_M}$ of free abelian groups, and hence corresponds uniquely to a function $g':P_N \to \Iset{P_M}$ because of the freeness of $\Iset{P_N}$.
		\item We need a way to lift $g$ to an equivalence class of \underline{transition}-preserving functors $\Fold{N} \to \Fold{M}$. As an intermediate step, we start lifting $g'$ to a functor $\Gr{P_N} \to \Fold{M}$.
		\item To do this, note that for each right inverse of $\Multiplicity$, viz. some $\alpha:\Iset{P_M} \to \IStrings{P_M}$ such that $\Multiplicity(\alpha(u)) = u$, the composition $g';\alpha: P_N \to \IStrings{P_M}$ is a function from $P_N$ to $\obj{\Fold{M}}$. 
		\item $\Fold{M}$ is clearly strict compact closed and by definition respects the axiom $\eta_u;\sigma_{u^{-1},u};\epsilon_{u^{-1}} = \id{\tensorUnit}$, hence we can use Remark~\ref{rem:freeness of gr} to obtain a functor $F:\Gr{P_N} \to \Fold{M}$.
		\item We extend the functor $F:\Gr{P_N} \to \Fold{M}$ to a functor $\Fold{\langle f,g \rangle}: \Fold{N} \to \Fold{M}$, mapping $t_{u,v}$ to $f(t)_{F(u),F(v)}$ and coinciding with $F$ on cups, caps and symmetries.
		\item Using the equivalent axiomatization for $\Fold{N}$ presented above, we note that the ${t_{u,v}}$ are the components of a \underline{transition} in $\Fold{N}$, and that all \underline{transitions} in $\Fold{N}$ have components of type $t_{u,v}$ for some $t \in T_N$ and objects $u,v$. On the other hand, ${f(t)_{Fu, Fv}}$ are the components of a \underline{transition} in $\Fold{M}$, and hence $\Fold{\langle f,g \rangle}$ preserves \underline{transitions}. 
		\item For each suitable choice of $\alpha$, we get functors that are identified by the relation $\mathfrak{R}_{\Fold{N},\Fold{M}}$, hence this construction is independent from the choice of $\alpha$. Moreover, if $F, G$ are two functors $\Fold{N} \to \Fold{M}$ and $(F,G) \in \mathfrak{R}_{\Fold{N},\Fold{M}}$ then both $F,G$ are obtained using the procedure sketched above, with different choices of $\alpha$.
	\end{itemize}
	This ensures that what we are doing is well defined , and that to each $\langle f, g \rangle$ corresponds a morphism $\Fold{\langle f,g \rangle}$ in the category $\GExPetri$. Proving functoriality of $\Fold{-}$ is easy noting that the identity morphism $\langle \id{T_N}, \id{\Iset{P_N}} \rangle$ is mapped in the equivalence class of the identity functor $\Fold{N} \to \Fold{N}$, and hence to the identity functor on $\Fold{N}$ in $\GExPetri$. Composition is preserved noting that $\Fold{\langle f;f', g;g'\rangle }$ sends a transition $t$ to $(f;f')(t)$, that is a \underline{transition} since composition of \underline{transition}-preserving functors is \underline{transition}-preserving, and $\mathfrak{R}$ is a congruence with respect to this.
	
	\bigskip
	\noindent
	Next step is to prove functoriality of $\UnFold{-}$. First of all we have to determine how $\UnFold{-}$ acts on morphisms. Let $\CategoryC$ and $\CategoryD$ be integer Petri categories. We denote their monoids of objects as $\IStrings{C}$ and $\IStrings{D}$, respectively. If $F:\CategoryC \to \CategoryD$ is a representative of a morphism in $\GExPetri$, then by definition it preserves structural arrows, meaning that if $u,v \in \Gr{\CategoryC, \nu}$ then $\Multiplicity(Fu)=\Multiplicity(Fv)$.
	But then the action of $F$ on objects induces an obvious homomorphism of free abelian groups $F_{pl}:\Iset{C} \to \Iset{D}$, which is clearly independent from the choice of representative $F$. 
	
	Since $F$ is also \underline{transition}-preserving, then it induces an obvious function $F_{tr}: T_{\UnFold{\CategoryC}} \to T_{\UnFold{\CategoryD}}$: Each transition $t$ of the net $\UnFold{\CategoryC}$ comes by definition from a \underline{transition} $\tau$ in $\CategoryC$. $F_{tr}$ then maps each $t$  to the transition of $\UnFold{\CategoryD}$ coming from the \underline{transition} in $\CategoryD$ having components $F\tau_{Fu,Fv}$. This is again clearly independent from the choice of $F$. We then set $\UnFold{F} := \langle F_{tr}, F_{pl} \rangle$. Functoriality at this point follows trivially from the definitions.

	\bigskip
	\noindent
	Now we have to prove the adjunction. We define the co-unit  $\epsilon:\Fold{\UnFold{-}} \to (-)$ specifying its components. For each $\CategoryC \in \obj{\GExPetri}$, the functor $\epsilon_\CategoryC:\Fold{\UnFold{\CategoryC}} \to \CategoryC$ is identity on objects and structural arrows. Given a \underline{transition} $\tau: \pi_{\CategoryC, \nu} \to \pi_{\CategoryC, \nu'}$ in the category $\CategoryC$, this will correspond to a transition $[\tau]$ in the net $\UnFold{\CategoryC}$, and this transition will be again mapped to a family of morphisms $[[\tau]]_{u,v}$ in $\Fold{\UnFold{\CategoryC}}$, with
	\begin{equation*}
		\Multiplicity(u) = \Pin{[\tau]} = \nu \qquad \Multiplicity(v) = \Pout{[\tau]} = \nu'
	\end{equation*}
	Thanks to the axioms in~\ref{eq: axiom}, the $[[\tau]]_{u,v}$ define a \underline{transition} in the category $\Fold{\UnFold{\CategoryC}}$, and we can define $\epsilon_\CategoryC$ as mapping each morphism $[[\tau]]_{u,v}$ to $\tau_{u,v}$. This obviously makes $\epsilon_\CategoryC$ \underline{transition}-preserving, and thus a representative of a morphism in $\GExPetri$. Then each $[\epsilon_\CategoryC]_{\mathfrak{R}_{\CategoryC, \CategoryC}}$ (here $[-]$ denotes an equivalence class) defines the components of a natrual transformation $\epsilon:\Fold{\UnFold{-}} \to (-)$, and proving that $\epsilon$ has the co-universal property is just a straightforward check.
	
	The unit $\eta: (-) \to \UnFold{\Fold{-}}$ is much easier to define: For each net $N \eqdef \Net{N}$, $\eta_N$ it is the isomorphism $\langle \varphi, id_{\Iset{P_N}} \rangle$, where $\varphi$ is the bijection sending each transition $t$ of $N$ to the transition $[t]$ of $\UnFold{\Fold{N}}$ coming from the \underline{transition} in $\Fold{N}$ having components $t_{u,v}$. 
\end{proof}
\firstequivalence*
\begin{proof}
	We just have to show that the counit $\epsilon_\CategoryC$ of the adjunction $\Fold{-} \vdash \UnFold{-}$ is an iso if and only if $\CategoryC \in \PetriZ$, which is obvious from the definitions.
\end{proof}
\secondequivalence*
\begin{proof}
	This is quite easy. First of all we note that $\PetriZZ$ can obviously be considered a full subcategory of $\PetriZ$. This is clear since any multiset can also be considered as a signed multiset, and homomorphisms of free commutative monoids $\Mset{S} \to \Mset{S'}$ can be lifted to a homomorphisms of free abelian groups $\Iset{S} \to \Iset{S'}$ via the usual free properties.
	The equivalence then follows trivially that:
	\begin{itemize}
		\item The image through $\Fold{-}$ of a net in $\PetriZZ$ is a positive object in $\ExPetriZ$, which is obvious from the definition;
		\item The image through $\UnFold{-}$ of a positive object in $\ExPetriZ$ is a net in $\PetriZZ$, which is again obvious from the definition.
	\end{itemize}
\end{proof}\label{sec:appendix}

\end{document}